\newtheorem{thm}{Theorem}[section]
\newtheorem{lemma}[thm]{Lemma}
\newtheorem{cor}[thm]{Corollary}
\numberwithin{equation}{section}
\theoremstyle{definition}
\newtheorem{ex}[thm]{Example}
\newtheorem*{remark}{Remark}
\newcommand{\SL}{\mathrm{SL}}
\newcommand{\bbZ}{\mathbb{Z}}
\begin{document}
\title[An Equivalence Relation on A Set of Words of Finite Length]
{An Equivalence Relation on \\ A Set of Words of Finite Length}
\author{Yotsanan Meemark and Tassawee Thitipak}
\address{Yotsanan Meemark\\ Department of Mathematics\\ Faculty of Science
\\ Chulalongkorn University\\ Bangkok, 10330 THAILAND}
\email{\tt yotsanan.m@chula.ac.th}

\address{Tassawee Thitipak\\ Department of Mathematics\\ Faculty of Science
\\ Chulalongkorn University\\ Bangkok, 10330 THAILAND}
\email{\tt thitipakt@gmail.com}

\thanks{The research of the first author was supported in part by Grants for Development of New Faculty Staff from Chulalongkorn University, Thailand. This work grows out of the second author's master thesis at Chulalongkorn university written under the direction of the first author to which the second author expresses his gratitude.} 

\keywords{Equivalence relations; $\SL_2$; Words.}

\subjclass[2000]{Primary: 20G40; Secondary: 05E15}

\begin{abstract}
In this work, we study several equivalence relations induced from the partitions of the sets of words of finite length. We have results on words over finite fields extending the work of Bacher (2002, Europ. J. Combinatorics, {\bf 23}, 141-147). Cardinalities of its equivalence classes and explicit relationships between two words are determined. Moreover,  we deal with words of finite length over the ring $\mathbb{Z}/N\mathbb{Z}$ where $N$ is a positive integer. We have arithmetic results parallel to Bacher's.
\end{abstract}

\maketitle

\section{Introduction}

Let $k$ be a finite field and $F_k$ denote the set of all finite words with letters in $k$. $F_k$ is a free monoid with identity $\varepsilon$, called the {\it empty word}. Consider the special linear group of degree two over $k$, $\SL_2(k)$, consisting of $2 \times 2$ matrices over $k$ of determinant one. It has been proved in \cite{B02} Lemma 2.1 that $\SL_2(k)$ generated as a monoid by the set of matrices 
\[
S =  \left\{ \begin{bmatrix} 0 & 1 \\ -1 & \alpha \end{bmatrix} : \alpha \in k \right\}.
\]
We can view $S$ as $k$ and thus every word $w = \alpha_1 \dots \alpha_l \in F_k$ is corresponding to the product 
\[
\begin{bmatrix} 0 & 1 \\ -1 & \alpha_1 \end{bmatrix} \dots
\begin{bmatrix} 0 & 1 \\ -1 & \alpha_l \end{bmatrix} \in \SL_2(k). 
\]
This gives rise to an onto homomorphism of monoids
\[
\pi : F_k \to \SL_2(k).
\]

We define an equivalence relation $\sim$ on $k^2 \smallsetminus \left\{\begin{bmatrix} 0 \\ 0 \end{bmatrix} \right\}$ by 
\[
\begin{bmatrix} s \\ t \end{bmatrix} \sim \begin{bmatrix} u \\ v \end{bmatrix}
\Leftrightarrow \; \text{$\begin{bmatrix} s \\ t \end{bmatrix} = \lambda \begin{bmatrix} u \\ v \end{bmatrix}$ for some $\lambda \in k^\times$}.
\]
Its equivalence classes are the lines spanned by $\begin{bmatrix} 1 \\ x \end{bmatrix}$, $x \in k$, and the line spanned by~$\begin{bmatrix} 0 \\ 1 \end{bmatrix}$, called the {\it infinite line}, with the origin deleted. Then we usually write these classes as $\begin{bmatrix} 1 \\ x \end{bmatrix}$, ${x \in k}$, and $\begin{bmatrix} 0 \\ 1 \end{bmatrix}$. Thus the set of all equivalence classes, denoted by $\mathbb{P}^1(k)$ and called the {\it projective {$1$-space}}. The group $\SL_2(k)$ acts on  $\mathbb{P}^1(k)$ by left multiplication. Bacher defined the subset $\mathcal{A}$ of $F_k$ by
\[
\mathcal{A} = \left\{w \in F_k :  \pi(w)\begin{bmatrix} 0 \\ 1 \end{bmatrix} 
= \begin{bmatrix} 1 \\ 0 \end{bmatrix}\right\}.
\]
The sets $\mathcal{A}$ and $\mathcal{C} = F_k \smallsetminus \mathcal{A}$ divide $F_k$ into two disjoint pieces. This partition leads to an equivalence relation on $F_k$. 

For $r \in k$, we define two disjoint subsets $\mathcal{A}_r$ and $\mathcal{C}_r$ of $F_k$ by
\[ 
\mathcal{A}_r = \left\{w \in F_k :  \pi(w)\begin{bmatrix} 0 \\ 1 \end{bmatrix} 
= \begin{bmatrix} 1 \\ r \end{bmatrix}\right\}. 
\]
and $\mathcal{C}_r = F_k \smallsetminus \mathcal{A}_r$. Hence $\mathcal{A} = \mathcal{A}_0$. 
In Sections 2 and 3, we investigate arithmetic and combinatorial properties of the equivalence relation on $F_k$ induced by the partition $\mathcal{A}_r$ and $\mathcal{C}_r$.

Let $N$ be a positive integer. Another route to extend Bacher's work is to study the special linear group over $\mathbb{Z}/N\mathbb{Z}$, the ring of integers modulo $N$. We present this topic in Section 4. Write $F_N$ for the set of all finite words with letters in $\bbZ/N\bbZ$. Consider the special linear group of degree two over $\bbZ/N\bbZ$, $\SL_2(\bbZ/N\bbZ)$, consisting  of $2 \times 2$ matrices over $\bbZ/N\bbZ$ of determinant one. Let 
\[
S' =  \left\{ \begin{bmatrix} 0 & 1 \\ -1 & \alpha \end{bmatrix} : \alpha \in \bbZ/N\bbZ \right\}.
\]
We show that this set generates $\SL_2(\bbZ/N\bbZ)$ as a monoid. Our proof is different from \cite{B02} Lemma 2.1. We use the basic fact that every closed subset of a finite group is a group. This result shows that every element of $\SL_2(\bbZ/N\bbZ)$ can be written in at least one way as a finite word with letters in $S'$.  

We can also consider $S'$ as $\bbZ/N\bbZ$ and hence every word $w = \alpha_1 \dots \alpha_l \in F_N$ is corresponding to the product 
\[
\begin{bmatrix} 0 & 1 \\ -1 & \alpha_1 \end{bmatrix} \dots
\begin{bmatrix} 0 & 1 \\ -1 & \alpha_l \end{bmatrix} \in \SL_2(\bbZ/N\bbZ). 
\]
This yields an onto homomorphism of monoids
\[
\pi : F_N \to \SL_2(\bbZ/N\bbZ).
\]
For $\mathbb{Z}/N\mathbb{Z}$, we define an equivalence relation $\sim'$ on $(\bbZ/N\bbZ)^2 \smallsetminus \left\{\begin{bmatrix} 0 \\ 0 \end{bmatrix} \right\}$ by 
\[
\begin{bmatrix} s \\ t \end{bmatrix} \sim' \begin{bmatrix} u \\ v \end{bmatrix}
\Leftrightarrow \; \text{$\begin{bmatrix} s \\ t \end{bmatrix} = \lambda \begin{bmatrix} u \\ v \end{bmatrix}$ for some $\lambda \in (\bbZ/N\bbZ)^\times$}.
\]
Here $(\bbZ/N\bbZ)^\times$  denotes the unit group of the ring $\bbZ/N\bbZ$. The group $\SL_2(\bbZ/N\bbZ)$ acts on  the set of equivalence classes by left multiplication. Parallel to Bacher's, we set 
\[
\bar{\mathcal{A}} = \left\{w \in F_N :  \pi(w)\begin{bmatrix} 0 \\ 1 \end{bmatrix} 
= \begin{bmatrix} 1 \\ 0 \end{bmatrix}\right\}
\]
and $\bar{\mathcal{C}} = F_N \smallsetminus \bar{\mathcal{A}}$. We study this partition of $F_N$ in the last two sections.

The paper is organized as follows. Arithmetic and combinatorial properties implying the cardinalities of $\mathcal{A}_r$ and $\mathcal{C}_r$ are studied in Section $2$. Section 3 gives an algorithm to distinguish the partition $\mathcal{A}_r$ and $\mathcal{C}_r$. Words over $\bbZ/N\bbZ$ and the partition $\bar{\mathcal{A}}$ and $\bar{\mathcal{C}}$ are presented in Section 4. The final section is devoted to $\bar{\mathcal{A}}$ including unique factorization, predecessors, successors and periodic words, parallel to Bacher's $\mathcal{A}_0$.

\bigskip

\section{Cardinalities of $\mathcal{A}_r$ and $\mathcal{C}_r$}

This section presents the preliminary properties of words in $\mathcal{A}_r$ and results on the cardinalities of $\mathcal{A}_r$ and $\mathcal{C}_r$.

For $w \in F_k$ with $\pi(w) = \begin{bmatrix} a & b \\c & d \end{bmatrix} \in \SL_2(k)$, we note that
\begin{align*}
w \in \mathcal{A}_r 
&\Leftrightarrow \begin{bmatrix} 1 \\ r \end{bmatrix}  = \begin{bmatrix} a & b \\c & d \end{bmatrix} \begin{bmatrix} 0 \\ 1 \end{bmatrix} = \begin{bmatrix} b \\ d \end{bmatrix} \Leftrightarrow d=br \\ 
& \Leftrightarrow \pi(w) = \begin{bmatrix}a & b \\ ar-b^{-1} & br \end{bmatrix} \; \text{with $a \in k, b \in k^{\times}$}.
\end{align*}
Therefore we have shown

\begin{thm}
For $r \in k$,
\[
\mathcal{A}_r = \left\{w \in F_k : \pi(w)=\begin{bmatrix}a & b \\ar-b^{-1} & br \end{bmatrix} \text{for some} \ a \in k, b \in k^{\times}\right\}.
\]
\end{thm}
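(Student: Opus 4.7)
The plan is to verify the theorem by a direct entry-by-entry analysis of $\pi(w)$, exploiting the notational convention set up after the definition of $\mathbb{P}^1(k)$, namely that $\begin{bmatrix}1\\r\end{bmatrix}$ denotes a $\sim$-equivalence class rather than a literal column vector. First I would unwind the definition: $w \in \mathcal{A}_r$ means $\pi(w)\begin{bmatrix}0\\1\end{bmatrix}$ represents the same point of $\mathbb{P}^1(k)$ as $\begin{bmatrix}1\\r\end{bmatrix}$, i.e.\ equals $\lambda\begin{bmatrix}1\\r\end{bmatrix}$ for some $\lambda \in k^\times$.

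Writing $\pi(w) = \begin{bmatrix}a & b\\ c & d\end{bmatrix}$, the product $\pi(w)\begin{bmatrix}0\\1\end{bmatrix}$ is simply the second column $\begin{bmatrix}b\\d\end{bmatrix}$. The projective condition therefore splits into the scalar equations $b = \lambda$ and $d = \lambda r$, which together say that $b \in k^\times$ and $d = br$. Invoking $\det \pi(w) = 1$ and solving $ad - bc = 1$ for $c$ yields $c = (ad - 1)/b = ar - b^{-1}$, while $a$ remains unconstrained in $k$.

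For the reverse inclusion I would check that any matrix of the form $\begin{bmatrix}a & b\\ ar - b^{-1} & br\end{bmatrix}$ with $a \in k$ and $b \in k^\times$ does have determinant one (a one-line expansion $abr - b(ar - b^{-1}) = 1$) and sends $\begin{bmatrix}0\\1\end{bmatrix}$ to $\begin{bmatrix}b\\br\end{bmatrix} = b\begin{bmatrix}1\\r\end{bmatrix}$, which lies in the class of $\begin{bmatrix}1\\r\end{bmatrix}$; surjectivity of $\pi:F_k \to \SL_2(k)$ then produces a preimage word lying in $\mathcal{A}_r$. The argument is essentially a chain of equivalences and presents no real obstacle; the one point requiring vigilance is that the symbol $\begin{bmatrix}1\\r\end{bmatrix}$ in the definition of $\mathcal{A}_r$ names a projective class, which is precisely what allows $b$ to range over the whole of $k^\times$ rather than being pinned to the value $1$.
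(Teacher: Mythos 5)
Your argument is essentially the paper's own: interpret the equality $\pi(w)\begin{bmatrix}0\\1\end{bmatrix}=\begin{bmatrix}1\\r\end{bmatrix}$ projectively, read off the second column $\begin{bmatrix}b\\d\end{bmatrix}$ to get $b\in k^\times$ and $d=br$, and use $\det\pi(w)=1$ to pin down $c=ar-b^{-1}$, with the converse verified by the same computation in reverse; your explicit flagging of the projective-class convention is exactly the point the paper relies on. The only cosmetic slip is the appeal to surjectivity of $\pi$ in the reverse inclusion, which is unnecessary since that inclusion concerns a word $w$ already given with $\pi(w)$ of the stated form, but this does not affect correctness.
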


The set $\mathcal{A}_0$ has been studied by Bacher in \cite{B02}. Our results are for the case $r \ne 0$. For $l \ge 0$, we write $F_k^l$ for the set of words over $k$ of length $l$, $\mathcal{A}_r^l = F_k^l \cap \mathcal{A}_r$ and $\mathcal{C}_r^l = F_k^l \cap \mathcal{C}_r$.  Unless specify, we assume $r \in k^\times$ throughout this section. We begin with the right insertion. 

\begin{thm} \label{rightadd}
Let $w \in F_k$. Then $w \in \mathcal{A}_r^l$ if and only if $w \alpha \in \mathcal{C}_r^{l+1}$ for all $\alpha \in k$. Moreover, if $w \in \mathcal{C}_r^l$, then there exists a unique $\alpha \in k$ such that $w \alpha \in \mathcal{A}_r^{l+1}$.
\end{thm}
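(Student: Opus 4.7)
My plan is to reduce the statement to a linear equation in $\alpha$ by multiplying out the relevant matrix product. Writing $\pi(w) = \begin{bmatrix} a & b \\ c & d \end{bmatrix}$ (so $ad - bc = 1$), I compute
\[
\pi(w\alpha) = \pi(w)\begin{bmatrix} 0 & 1 \\ -1 & \alpha \end{bmatrix} = \begin{bmatrix} -b & a + b\alpha \\ -d & c + d\alpha \end{bmatrix}.
\]
Applying Theorem 2.1 to the word $w\alpha$, I see that $w\alpha \in \mathcal{A}_r^{l+1}$ if and only if $a + b\alpha \in k^\times$ and $c + d\alpha = r(a + b\alpha)$. Rearranging, the crucial conditions become
\[
(d - rb)\alpha = ra - c \qquad \text{and} \qquad a + b\alpha \ne 0.
\]

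Next I would split on the value of the coefficient $d - rb$. Using $ad - bc = 1$, the vanishing $d - rb = 0$ is equivalent to $w \in \mathcal{A}_r^l$: by Theorem 2.1 this is exactly the condition $d = rb$ with $b \in k^\times$, and the case $b = 0, d = 0$ is impossible since $ad = 1$. If $w \in \mathcal{A}_r^l$, plugging in the explicit form of $\pi(w)$ from Theorem 2.1 gives $ra - c = ra - (ar - b^{-1}) = b^{-1} \ne 0$, so the linear equation has no solution and $w\alpha \in \mathcal{C}_r^{l+1}$ for every $\alpha \in k$. This already yields the forward implication of the biconditional, and its converse follows by contrapositive once existence is established in the $\mathcal{C}_r$ case.

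For $w \in \mathcal{C}_r^l$ the coefficient $d - rb$ is nonzero, so $\alpha = (ra - c)/(d - rb)$ is the unique solution of the linear equation. The only delicate point, and what I expect to be the main (though minor) obstacle, is to verify the side condition $a + b\alpha \ne 0$. I would argue by contradiction: if $a + b\alpha = 0$ then $b \ne 0$ and $\alpha = -a/b$; substituting into $(d - rb)\alpha = ra - c$ and simplifying collapses to $ad = bc$, contradicting $\det \pi(w) = 1$. With this nonvanishing check in hand, the unique $\alpha$ lies in $\mathcal{A}_r^{l+1}$, which simultaneously gives the "moreover" clause and completes the proof of the biconditional.
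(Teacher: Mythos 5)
Your proposal is correct and follows essentially the same route as the paper: compute $\pi(w\alpha)=\begin{bmatrix} -b & a+b\alpha \\ -d & c+d\alpha\end{bmatrix}$, reduce membership in $\mathcal{A}_r$ to the linear equation $(d-rb)\alpha = ra-c$, and split according to whether $w\in\mathcal{A}_r$ (coefficient zero, right side $b^{-1}\ne 0$, no solution) or $w\in\mathcal{C}_r$ (unique solution $\alpha=(ra-c)(d-rb)^{-1}$). The only cosmetic difference is that you check the side condition $a+b\alpha\ne 0$ by a determinant contradiction, while the paper gets it for free by computing that entry explicitly as $(d-br)^{-1}$.
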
 
\begin{proof}
Assume that $w \in \mathcal{A}_r^l$ and let $\alpha \in k$. Then $\pi(w)=\begin{bmatrix} a & b \\ar-b^{-1} & br \end{bmatrix}$ for some $a \in k$ and $b \in k^\times$. Thus 
\[
\pi(w\alpha)=\pi(w)\pi(\alpha)=\begin{bmatrix} a & b \\ar-b^{-1} & br \end{bmatrix}\begin{bmatrix} 0 & 1 \\-1 & \alpha \end{bmatrix}
=\begin{bmatrix} -b & a+\alpha b \\ -br & ar-b^{-1}+\alpha br \end{bmatrix}.
\] 
If $ar-b^{-1}+\alpha br = (a+\alpha b)r$, then $-b^{-1} = 0$, a contradiction. Thus $w\alpha \in \mathcal{C}_r^{l+1}$. Conversely, suppose that $w \in \mathcal{C}_r^l$. Then $\pi(w)=\begin{bmatrix} a & b \\c & d \end{bmatrix} \in \SL_2(k)$ and $d \ne br$. Note that for $\alpha \in k$, we have
\[
\pi(w\alpha)=\pi(w)\pi(\alpha)=\begin{bmatrix} a & b \\c & d \end{bmatrix}\begin{bmatrix} 0 & 1 \\-1 & \alpha \end{bmatrix}=\begin{bmatrix} -b & a+\alpha b \\-d & c+\alpha d \end{bmatrix}.
\]
Since $d \ne br$, we can choose a unique $\alpha$, namely $\alpha = (ar-c)(d-br)^{-1} \in k$ such that 
$\pi(w\alpha) = \begin{bmatrix} -b & (d-br)^{-1} \\-d & r(d-br)^{-1} \end{bmatrix}$ 
and hence $w\alpha \in \mathcal{A}_r^{l+1}$.
\end{proof}
 
For the left insertion, we obtain a slightly different property. 
 
\begin{thm}
Let  $w \in F_k$ with $\pi(w)=\begin{bmatrix} a & b \\ c & d \end{bmatrix} \in \SL_2(k)$.\\ 
{\rm (i)} If $w \in \mathcal{C}_r^l$, then $d=0$ if and only if $\alpha w \in \mathcal{C}_r^{l+1}$ for all $\alpha \in k$.\\
{\rm (ii)} If $w \in \mathcal{A}_r^l$, then there exists a unique $\alpha \in k$ such that $\alpha w \in \mathcal{A}_r^{l+1}$.
\end{thm}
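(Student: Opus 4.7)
The plan is to mirror the proof of Theorem \ref{rightadd} but to apply left multiplication, since Theorem 2.1 detects membership in $\mathcal{A}_r$ through the second column of $\pi(w)$. First I would compute
\[
\pi(\alpha w)=\pi(\alpha)\pi(w)=\begin{bmatrix} 0 & 1 \\ -1 & \alpha \end{bmatrix}\begin{bmatrix} a & b \\ c & d \end{bmatrix}=\begin{bmatrix} c & d \\ -a+\alpha c & -b+\alpha d \end{bmatrix},
\]
and read off the second column as $\begin{bmatrix} d \\ -b+\alpha d \end{bmatrix}$. By Theorem 2.1, $\alpha w \in \mathcal{A}_r^{l+1}$ if and only if $d \ne 0$ together with the single linear equation $-b+\alpha d = rd$, equivalently $\alpha = bd^{-1}+r$.

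For (i), with $w \in \mathcal{C}_r^l$, I would split on whether $d=0$. If $d=0$, the $(1,2)$-entry of $\pi(\alpha w)$ is zero for every $\alpha$, so no $\alpha$ satisfies the criterion above and every $\alpha w$ lies in $\mathcal{C}_r^{l+1}$. Conversely, if $d\ne 0$, then the explicit choice $\alpha = bd^{-1}+r$ places $\alpha w$ in $\mathcal{A}_r^{l+1}$, contradicting the universal $\mathcal{C}_r$ statement. This gives the biconditional.

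For (ii), Theorem 2.1 puts $\pi(w)$ in the form with $b\in k^\times$ and $d=br$. The standing assumption $r\in k^\times$ then forces $d=br\ne 0$, so the linear equation above has exactly one solution, and substituting $d=br$ I expect the unique value to simplify to $\alpha = r+r^{-1}$.

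The only subtlety, and the source of the asymmetry with Theorem \ref{rightadd}, is that left multiplication by $\pi(\alpha)$ places the original $d$ into the new $(1,2)$ slot, so the existence of a suitable $\alpha$ hinges on $d \ne 0$ rather than being automatic (on the right the corresponding entry was $-b$, which is nonzero for every $w\in\mathcal{A}_r$). Once that observation is made, the rest is bookkeeping, and no real obstacle remains.
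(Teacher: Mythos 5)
Your proposal is correct and follows essentially the same route as the paper: compute $\pi(\alpha w)=\pi(\alpha)\pi(w)$, read the membership condition off the second column as the linear equation $-b+\alpha d=rd$, split on $d=0$ versus $d\ne 0$ for (i), and use $d=br\ne 0$ (from $r\in k^\times$) to get the unique $\alpha=bd^{-1}+r=r+r^{-1}$ in (ii), which is exactly the paper's argument with the same explicit choices of $\alpha$.
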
 
\begin{proof}
We first observe that for $\alpha \in k$, $$\pi(\alpha w)=\pi(\alpha)\pi(w)=\begin{bmatrix} 0 & 1 \\-1 & \alpha \end{bmatrix}\begin{bmatrix} a & b \\c & d \end{bmatrix}=\begin{bmatrix} c & d \\-a+\alpha c & -b+\alpha d \end{bmatrix}.$$ 

\noindent {\rm (i)} Assume that $w \in \mathcal{C}_r^l$. If $d=0$, then $b \ne 0$, so $\pi(\alpha w) = \begin{bmatrix} c & 0 \\-a+\alpha c & -b \end{bmatrix}$ which means $\alpha w \in \mathcal{C}_r^{l+1}$. If $d \ne 0$, then there exists $\alpha = (b+dr)d^{-1}$ such that $\pi(\alpha w) = \begin{bmatrix} c & d \\-d^{-1}+cr & dr \end{bmatrix}$ which implies $\alpha w \in \mathcal{A}_r^{l+1}$.

\noindent {\rm (ii)} Assume that $w \in \mathcal{A}_r^l$. Then $d=br$. A simple calculation yields a unique $\alpha = r+r^{-1}$ such that $\pi(\alpha w) = \begin{bmatrix} c & br \\-a+c(r+r^{-1}) & br^2 \end{bmatrix}$ which means $\alpha w \in \mathcal{A}_r^{l+1}$.
\end{proof}

Next we present results on left and right deletions of a word $w \in \mathcal{A}_r$.
 
\begin{thm} \label{cancel}
Let $\alpha_1 \dots \alpha_l \in \mathcal{A}_r^l$. Then $\alpha_1 \dots \alpha_{l-1} \in \mathcal{C}_r^{l-1}$, and
$\alpha_2 \dots \alpha_l \in \mathcal{A}_r^{l-1}$ if and only if $\alpha_1 = r+r^{-1}$.
\end{thm}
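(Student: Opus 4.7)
The plan is to read the statement directly off the two insertion theorems just established. For the right-deletion assertion, I would argue by contradiction: if $\alpha_1\dots\alpha_{l-1}$ were in $\mathcal{A}_r^{l-1}$, then Theorem~\ref{rightadd} would force every right-extension, and in particular $\alpha_1\dots\alpha_{l-1}\alpha_l$, to lie in $\mathcal{C}_r^l$, contrary to hypothesis. Hence $\alpha_1\dots\alpha_{l-1}\in\mathcal{C}_r^{l-1}$.

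For the forward direction of the ``if and only if'', set $w'=\alpha_2\dots\alpha_l$ and suppose $w'\in\mathcal{A}_r^{l-1}$. Theorem~2.3(ii) asserts the uniqueness of the letter $\alpha\in k$ with $\alpha w'\in\mathcal{A}_r^l$, and its proof identifies this unique value as $r+r^{-1}$. Since $\alpha_1 w'$ is given to lie in $\mathcal{A}_r^l$, the equality $\alpha_1=r+r^{-1}$ is forced.

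The reverse direction is the one substantive step. Writing $\pi(w')=\begin{bmatrix} a & b\\ c & d\end{bmatrix}$ and assuming $\alpha_1=r+r^{-1}$, I must rule out $w'\in\mathcal{C}_r^{l-1}$. If $w'$ were in $\mathcal{C}_r^{l-1}$ with $d=0$, Theorem~2.3(i) would give $\alpha w'\in\mathcal{C}_r^l$ for every $\alpha$, contradicting $\alpha_1 w'\in\mathcal{A}_r^l$. If instead $d\ne0$, the proof of Theorem~2.3(i) exhibits the unique $\alpha$ with $\alpha w'\in\mathcal{A}_r^l$ as $(b+dr)d^{-1}$; setting this equal to $r+r^{-1}$ collapses to $d=br$, whereupon Theorem~2.1 places $w'$ in $\mathcal{A}_r^{l-1}$, contradicting the subcase assumption. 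Thus $w'\in\mathcal{A}_r^{l-1}$, as required.

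The main obstacle, modest as it is, is recognizing that the uniqueness clauses of Theorems~\ref{rightadd} and~2.3 alone do not close the argument: one must extract the \emph{explicit} formulas for the unique ``left-extending'' letters from the proofs of Theorem~2.3 and compare them against $r+r^{-1}$. Once those formulas are in view, the left-deletion theorem reduces to the short case analysis above.
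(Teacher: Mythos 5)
Your argument is correct, but it takes a different route from the paper. The paper proves Theorem \ref{cancel} by direct computation: it writes $\pi(\alpha_1\dots\alpha_l)=\begin{bmatrix} a & b\\ ar-b^{-1} & br\end{bmatrix}$ with $b\in k^\times$, multiplies on the right by $\pi(\alpha_l)^{-1}$ to see that the resulting matrix cannot satisfy the $\mathcal{A}_r$-criterion (it would force $b^{-1}=0$), and multiplies on the left by $\pi(\alpha_1)^{-1}$ to read off directly that membership of $\alpha_2\dots\alpha_l$ in $\mathcal{A}_r$ is equivalent to $b=(\alpha_1 b-br)r$, i.e.\ $\alpha_1=r+r^{-1}$ -- so the full equivalence falls out in one stroke, with no case analysis. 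You instead derive the statement from the two insertion theorems: the right-deletion claim from Theorem \ref{rightadd} by contradiction (this is clean and exactly dual to how the paper later combines the two theorems for the counting argument), and the equivalence from the explicit unique left-extending letters exhibited in the proof of the left-insertion theorem ($r+r^{-1}$ when $w'\in\mathcal{A}_r$, and $(b+dr)d^{-1}$ when $w'\in\mathcal{C}_r$ with $d\ne0$), comparing them against $\alpha_1$. This is valid, with one small caveat you partly acknowledge: the statements of the insertion theorems do not record these formulas, and the left-insertion theorem's part (i) does not even assert uniqueness of the extending letter, so you are really importing the displayed matrix $\pi(\alpha w')=\begin{bmatrix} c & d\\ -a+\alpha c & -b+\alpha d\end{bmatrix}$ from its proof and observing that the membership condition $-b+\alpha d=dr$ is linear in $\alpha$ with invertible coefficient $d$; once that is spelled out, your case analysis ($d=0$ via part (i), $d\ne0$ via the formula, collapsing to $d=br$ and hence $w'\in\mathcal{A}_r$, a contradiction) closes correctly. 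What your approach buys is reuse of established results and a conceptual explanation of where $r+r^{-1}$ comes from; what the paper's buys is self-containedness and brevity, since the single matrix computation yields both directions of the equivalence simultaneously.
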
 
\begin{proof}
Assume that $\alpha_1\ldots\alpha_l \in \mathcal{A}_r^l$. Then $\pi(\alpha_1\ldots\alpha_l)=\begin{bmatrix} a & b \\ ar-b^{-1} & br \end{bmatrix}$ for some $a \in k$ and $b \in k^\times$. Thus
\begin{align*}
\pi(\alpha_1\ldots\alpha_{l-1})=\pi(\alpha_1\ldots\alpha_l) \pi(\alpha_l)^{-1}
&=\begin{bmatrix} a & b \\ ar-b^{-1} & br \end{bmatrix} \begin{bmatrix} \alpha_l & -1 \\ 1 & 0 \end{bmatrix}\\
&=\begin{bmatrix} \alpha_l a+b & -a \\ \alpha_l (ar-b^{-1})+br & b^{-1}-ar \end{bmatrix}.
\end{align*}
Since $b^{-1} \ne 0$, $b^{-1} - ar \ne -ar$ and so $\alpha_1\ldots\alpha_{l-1} \in \mathcal{C}_r^{l-1}$. Hence
\begin{align*}
\pi(\alpha_2\ldots\alpha_l)=\pi(\alpha_1)^{-1} \pi(\alpha_1\ldots\alpha_l) 
&=\begin{bmatrix} \alpha_1 & -1 \\ 1 & 0 \end{bmatrix} \begin{bmatrix} a & b \\ ar-b^{-1} & br \end{bmatrix}\\
&=\begin{bmatrix} \alpha_1 a - ar + b^{-1} & \alpha_1 b - br \\ a & b \end{bmatrix}.
\end{align*}
Therefore $\alpha_2\ldots\alpha_l \in \mathcal{A}_r^{l-1} \Leftrightarrow b = (\alpha_1 b - br)r \Leftrightarrow \alpha_1 = r + r^{-1}$.
\end{proof}

Theorem \ref{rightadd} results in $|\mathcal{A}_r^{l+1}| \ge |\mathcal{C}_r^l|$ and Theorem \ref{cancel} (i) gives rise to $|\mathcal{A}_r^{l+1}| \le |\mathcal{C}_r^l|$. Thus $|\mathcal{A}_r^{l+1}| = |\mathcal{C}_r^l|$. Since $|\mathcal{A}_r^l| + |\mathcal{C}_r^l| = q^l$, we get the recurrence relation
\[
|\mathcal{A}_r^{l+1}| + |\mathcal{A}_r^{l}| = q^l \;\, \text{for $l \ge 0$} \quad \text{and} \quad |\mathcal{A}_r^{0}| = 0.
\]
Solving this relation, we obtain the cardinalities of $\mathcal{A}_r^l$ and $\mathcal{C}_r^l$ for all $l \ge 0$. It should be pointing out that Bacher had the same numbers for $r=0$ in \cite{B02} Corollary 2.3. We record this result in

\begin{cor}
For a finite field $k$ with $q$ elements, $l \ge 0$ and $r \in k$, we have
\[
|\mathcal{A}_r^l| = \frac{q^l - (-1)^l}{q+1} \quad \text{and} \quad |\mathcal{C}_r^l| = \frac{q^{l+1} + (-1)^l}{q+1}.
\]
\end{cor}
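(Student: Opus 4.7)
The plan is essentially to execute the argument already outlined in the paragraph preceding the corollary statement: combine Theorems \ref{rightadd} and \ref{cancel} to reduce the count to a linear recurrence in $l$, and then solve it.

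First I would verify the identity $|\mathcal{A}_r^{l+1}| = |\mathcal{C}_r^l|$ for all $l \ge 0$. Theorem \ref{rightadd} exhibits, for each $w \in \mathcal{C}_r^l$, a unique letter $\alpha \in k$ with $w\alpha \in \mathcal{A}_r^{l+1}$, and different $w$'s give different extensions $w\alpha$; this yields an injection $\mathcal{C}_r^l \hookrightarrow \mathcal{A}_r^{l+1}$, so $|\mathcal{A}_r^{l+1}| \ge |\mathcal{C}_r^l|$. Conversely, Theorem \ref{cancel} says that chopping off the last letter of any word in $\mathcal{A}_r^{l+1}$ produces a word in $\mathcal{C}_r^l$; this map is injective because the right-extension in the previous step was unique, so $|\mathcal{A}_r^{l+1}| \le |\mathcal{C}_r^l|$. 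Together these give equality.

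Next, using $|\mathcal{A}_r^l| + |\mathcal{C}_r^l| = |F_k^l| = q^l$, I substitute to obtain the first-order linear recurrence
\[
|\mathcal{A}_r^{l+1}| + |\mathcal{A}_r^l| = q^l, \quad l \ge 0,
\]
with initial value $|\mathcal{A}_r^0| = 0$ (the empty word $\varepsilon$ maps to the identity, so $\pi(\varepsilon)[0,1]^{T} = [0,1]^{T} \ne [1,r]^{T}$). Solving via the ansatz $|\mathcal{A}_r^l| = A q^l + B(-1)^l$, the particular part gives $A(q+1)q^l = q^l$, so $A = 1/(q+1)$; the initial condition then forces $B = -1/(q+1)$. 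This yields
\[
|\mathcal{A}_r^l| = \frac{q^l - (-1)^l}{q+1},
\]
and subtracting from $q^l$ produces the stated formula for $|\mathcal{C}_r^l|$.

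Finally I would note that the above argument was carried out under the blanket assumption $r \in k^\times$ of this section, so the case $r = 0$ still needs to be covered; but this is exactly Corollary 2.3 of \cite{B02}, which gives the same formulas. There is no real obstacle here: the combinatorial bijection between $\mathcal{C}_r^l$ and $\mathcal{A}_r^{l+1}$ has already been packaged into Theorems \ref{rightadd} and \ref{cancel}, and what remains is a routine solution of a two-term linear recurrence.
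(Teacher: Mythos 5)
Your proposal is correct and follows essentially the same route as the paper: the paper likewise combines Theorem \ref{rightadd} and Theorem \ref{cancel} to get $|\mathcal{A}_r^{l+1}| = |\mathcal{C}_r^l|$, solves the recurrence $|\mathcal{A}_r^{l+1}| + |\mathcal{A}_r^l| = q^l$ with $|\mathcal{A}_r^0| = 0$, and refers to Bacher's Corollary 2.3 for $r=0$. Your explicit injectivity argument for the deletion map (via the uniqueness in Theorem \ref{rightadd}) is a welcome clarification of a step the paper leaves implicit.
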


\bigskip

\section{Induced Equivalence Relations}

Let $r \in k$. The partition $\mathcal{A}_r$ and $\mathcal{C}_r$ of $F_k$ induces the equivalence relation $\sim_r$ on $F_k$. Its properties are studied in our next theorem.

\begin{thm} \label{main}
Let $x \in F_k$ and $\beta \in k$. We have \\
{\rm (i)} If $\alpha \in k$ and $\alpha \ne r$, then $\alpha\beta x \sim_r \gamma x$ where 
$\gamma = \dfrac{r^2 - (\alpha - \beta)r + 1 - \alpha \beta}{r-\alpha}$. \\
{\rm (ii)} $r\beta x \in \mathcal{A}_r$ if and only if $x \in \mathcal{A}_0$.
\end{thm}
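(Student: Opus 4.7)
My plan is uniform across both parts: by Theorem 2.1, a word $w$ lies in $\mathcal{A}_s$ precisely when the second column of $\pi(w)$ has the form $\begin{bmatrix}b'\\b's\end{bmatrix}$ with $b' \in k^{\times}$, i.e., the bottom coordinate equals $s$ times the top coordinate and the top is nonzero. Writing $\pi(x) = \begin{bmatrix}a & b \\ c & d\end{bmatrix}$ with $ad - bc = 1$, the whole argument reduces to computing the second column of $\pi(\alpha\beta x)$, $\pi(\gamma x)$, and $\pi(r\beta x)$, and checking that the resulting linear conditions on $(b,d)$ coincide.

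For (i), I would first compute
\[
\pi(\alpha\beta) = \begin{bmatrix}-1 & \beta \\ -\alpha & \alpha\beta - 1\end{bmatrix},
\]
so that the second column of $\pi(\alpha\beta x)$ is $\begin{bmatrix}-b + \beta d \\ -\alpha b + (\alpha\beta - 1)d\end{bmatrix}$. The membership $\alpha\beta x \in \mathcal{A}_r$ then becomes $-\alpha b + (\alpha\beta - 1)d = r(-b + \beta d)$, which rearranges to the linear relation $(r - \alpha)b = (1 + \beta r - \alpha\beta)d$. On the other hand, $\pi(\gamma x)$ has second column $\begin{bmatrix}d \\ -b + \gamma d\end{bmatrix}$, and $\gamma x \in \mathcal{A}_r$ reduces directly to $b = (\gamma - r)d$. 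The linking step—and the one I expect to take the most care—is to verify that the stated $\gamma$ satisfies $\gamma - r = (1 + \beta r - \alpha\beta)/(r - \alpha)$, which uses $\alpha \ne r$ essentially; granted this identity, the two linear conditions on $(b,d)$ are the same. The required nonzeroness of the top entries also comes for free: $d \ne 0$ since $b = (\gamma - r)d$ together with $d = 0$ would force $b = 0$ and violate $\det \pi(x) = 1$, and substituting $b = \beta d$ into $(r - \alpha)b = (1 + \beta r - \alpha\beta)d$ forces $d = 0$, ruling out the vanishing of $-b + \beta d$ as well.

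For (ii), I would specialize the same computation to $\alpha = r$. The second column of $\pi(r\beta x)$ is $\begin{bmatrix}-b + \beta d \\ -rb + (r\beta - 1)d\end{bmatrix}$, and after the cancellation $r(-b + \beta d) = -rb + r\beta d$ the test $r\beta x \in \mathcal{A}_r$ collapses to $-d = 0$ together with the nonzeroness $-b \ne 0$. This is exactly $d = 0$ and $b \ne 0$, which by Theorem 2.1 with $s = 0$ is the condition $x \in \mathcal{A}_0$. The overall point is that using Theorem 2.1's form of $\mathcal{A}_s$ rather than the definitional equality turns each membership test into a single linear equation in $(b,d)$; after that, both parts reduce to routine matrix bookkeeping.
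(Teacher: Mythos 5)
Your proposal is correct and follows essentially the same route as the paper: write $\pi(x)=\begin{bmatrix}a&b\\c&d\end{bmatrix}$, compute the second columns of $\pi(\alpha\beta x)$, $\pi(\gamma x)$ and $\pi(r\beta x)$, and reduce each membership in $\mathcal{A}_r$ to a linear condition in $b,d$, with the identity $\gamma-r=(1+\beta r-\alpha\beta)/(r-\alpha)$ doing exactly the algebra the paper performs in its chain of equivalences. Your extra check that the top entries cannot vanish is a harmless bonus (it is automatic since a column of an $\SL_2(k)$ matrix is nonzero), and part (ii) matches the paper's computation verbatim.
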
 
\begin{proof} 
Let $\pi(x)=\begin{bmatrix}a & b \\c & d \end{bmatrix} \in \SL_2(k)$. \\
(i) Assume that $\alpha, \beta \in k$ and $\alpha \ne r$. Then
\[
\pi(\alpha\beta x)=\begin{bmatrix}0 & 1 \\-1 & \alpha \end{bmatrix}\begin{bmatrix}0 & 1 \\-1 & \beta \end{bmatrix}\begin{bmatrix}a & b \\c & d \end{bmatrix}=\begin{bmatrix} -a+\beta c & -b+\beta d \\ -\alpha a-c+\alpha\beta c & -\alpha b-d+\alpha\beta d \end{bmatrix}
\]
and
\[
\pi(\gamma x)=\begin{bmatrix}0 & 1 \\-1 & \gamma \end{bmatrix}\begin{bmatrix} a & b \\ c & d\end{bmatrix} = \begin{bmatrix}c & d \\-a+\gamma c & -b+\gamma d\end{bmatrix}.
\]
Thus
\begin{align*}
\alpha\beta x \in \mathcal{A}_r &\Leftrightarrow (-b+\beta d)r = -\alpha b-d+\alpha\beta d\\
&\Leftrightarrow -br+\beta dr = -\alpha b-d+\alpha\beta d\\
&\Leftrightarrow dr^2 - \alpha dr = -br + \alpha b + dr^2 - (\alpha-\beta)dr + (1-\alpha\beta)d\\
&\Leftrightarrow dr = -b + \frac{(r^2 - (\alpha-\beta)r + 1 -\alpha\beta)}{r-\alpha}d,
\end{align*}
so $\alpha\beta x \sim_r \gamma x$ where $\gamma = \dfrac{(r^2 - (\alpha-\beta)r + 1 -\alpha\beta)}{r-\alpha}$. \\
(ii) Since ${\small\pi(r\beta x) = \begin{bmatrix}0 & 1 \\-1 & r \end{bmatrix}\begin{bmatrix}0 & 1 \\-1 & \beta \end{bmatrix}\begin{bmatrix} a & b \\ c & d\end{bmatrix} = \begin{bmatrix}-a+\beta c & -b+\beta d \\-c-ar+\beta rc & -d-br+\beta rd\end{bmatrix}}$, $r\beta x \in \mathcal{A}_r \Leftrightarrow (-b+\beta d)r = -d-br+\beta rd \Leftrightarrow d=0 \Leftrightarrow x \in \mathcal{A}_0$.
\end{proof}

\begin{remark}
This result leads to an algorithm to distinguish words in $F_k$. It extends Bacher's work on $\sim_0$ in \cite{B02} Proposition 2.4 (ii) to $\sim_r, r \in k$. Note that $\alpha \sim_r \varepsilon \Leftrightarrow \alpha \ne r$. 
Combined with Theorem \ref{main}, we completely classify all words into the partition $\mathcal{A}_r$ and $\mathcal{C}_r$ of $F_k$. 
\end{remark}

We illustrate Theorem \ref{main} and the above remark by the following numerical example.

\begin{ex}
Let $k = \mathbb{F}_3$. Consider $22102 \in F_k$.
\begin{itemize}	
	\item[$r=0$.] By Theorem \ref{main} (i), $22102 \sim_0 (2-2^{-1})102 = 0102.$
	By Theorem \ref{main} (ii), $0102 \sim_0 02 \sim_0 \varepsilon.$	Then we have $22102 \in \mathcal{C}_0$.
	
	\item[$r=1$.] 	By Theorem \ref{main} (i),		
	\begin{align*}
	22102 &\sim_1 \left[\frac{1^2-(2-2)1+1-2\cdot2}{1-2}\right]102 = 2102\\
	&\sim_1 \left[\frac{1^2-(2-1)1+1-2\cdot1}{1-2}\right]02 = 102.
	\end{align*}
	Since $2 \in \mathcal{C}_0$, $102 \in \mathcal{C}_1$ by Theorem \ref{main} (ii). Then we have $22102 \in \mathcal{C}_1$.
	
\item[$r=2$.]
	By Theorem \ref{main} (ii), we first consider 
	\[
	102 \sim_0 (0-1^{-1})2 = 22
	\sim_0 (2-2^{-1})\varepsilon = 0.
	\]
	Then $102 \in \mathcal{A}_0$, so we have $22102 \in \mathcal{A}_2$.	
\end{itemize}
\end{ex}

\bigskip

\section{Words over $\mathbb{Z}/N\mathbb{Z}$}

In this section, we study arithmetic properties of the partition $\bar{\mathcal{A}}$ and $\bar{\mathcal{C}}$ of the set $F_N$ defined parallel to Bacher's. Let
\[
S' =  \left\{ \begin{bmatrix} 0 & 1 \\ -1 & \alpha \end{bmatrix} : \alpha \in \bbZ/N\bbZ \right\}.
\]
We begin by giving the proof of the following lemma.

\begin{lemma} \label{lemsemi}
The set $S'$ generates $\SL_2(\bbZ/N\bbZ)$ as a semigroup. 
\end{lemma}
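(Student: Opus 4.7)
The plan is to apply the elementary fact that any nonempty subset of a finite group that is closed under multiplication is automatically a subgroup. I would let $H$ denote the subsemigroup of $\SL_2(\bbZ/N\bbZ)$ generated by $S'$; since $\SL_2(\bbZ/N\bbZ)$ is finite and $H$ is nonempty, $H$ is then in fact a subgroup, so it contains $I$ and is closed under inverses. It therefore suffices to exhibit a generating set of the group $\SL_2(\bbZ/N\bbZ)$ inside $H$.

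Next, a short matrix computation produces the elementary matrices inside $H$. Writing $M_\alpha = \begin{bmatrix} 0 & 1 \\ -1 & \alpha \end{bmatrix}$, I would first observe that $M_0^2 = -I$, so $-I \in H$. The products $M_\alpha M_0 = \begin{bmatrix} -1 & 0 \\ -\alpha & -1 \end{bmatrix}$ and $M_0 M_\alpha = \begin{bmatrix} -1 & \alpha \\ 0 & -1 \end{bmatrix}$ lie in $H$ for every $\alpha \in \bbZ/N\bbZ$; multiplying each on the left by $-I$ yields $\begin{bmatrix} 1 & 0 \\ \alpha & 1 \end{bmatrix}$ and $\begin{bmatrix} 1 & -\alpha \\ 0 & 1 \end{bmatrix}$. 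Since $-\alpha$ ranges over $\bbZ/N\bbZ$ as $\alpha$ does, $H$ contains every elementary matrix of $\SL_2(\bbZ/N\bbZ)$.

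To conclude, I would invoke the classical fact that $\SL_2(\bbZ/N\bbZ)$ is generated by its elementary matrices. One route is to combine the surjectivity of the reduction map $\SL_2(\bbZ) \to \SL_2(\bbZ/N\bbZ)$ with the known generation of $\SL_2(\bbZ)$ by $\begin{bmatrix} 1 & 1 \\ 0 & 1 \end{bmatrix}$ and $\begin{bmatrix} 1 & 0 \\ 1 & 1 \end{bmatrix}$; a second route is a direct row-reduction argument carried out in $\bbZ/N\bbZ$. I expect this last step to be the only genuinely delicate point, since ordinary Gaussian elimination over a field relies on division by units, whereas in $\bbZ/N\bbZ$ one must instead exploit that the entries of the first column of a matrix of determinant $1$ are coprime. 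The cleanest remedy is a Euclidean-style argument performed on lifts to $\bbZ$, or a reduction to the prime-power case via the Chinese Remainder Theorem.
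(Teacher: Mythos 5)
Your proposal is correct and is essentially the paper's argument: both hinge on the fact that the nonempty, multiplicatively closed subsemigroup generated by $S'$ inside the finite group $\SL_2(\bbZ/N\bbZ)$ is automatically a subgroup, and both then obtain generation by pulling a known generating set of $\SL_2(\bbZ)$ through the surjective reduction map $\SL_2(\bbZ)\to\SL_2(\bbZ/N\bbZ)$. The only difference is the choice of generators checked to lie in the subgroup: you produce all elementary matrices $\begin{bmatrix}1&0\\ \alpha&1\end{bmatrix}$, $\begin{bmatrix}1&\alpha\\ 0&1\end{bmatrix}$ and cite generation of $\SL_2(\bbZ)$ by the two unipotent matrices (your first route suffices; the delicate direct row reduction over $\bbZ/N\bbZ$ is not needed), whereas the paper verifies directly that the subgroup contains Serre's generators $\begin{bmatrix}0&1\\-1&0\end{bmatrix}$ and $\begin{bmatrix}-1&-1\\0&-1\end{bmatrix}$, the latter as a product of three letters of $S'$.
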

\begin{proof}
Recall Theorem 2 in Chapter VII of Serre's book \cite{S73} that the set of matrices $\left\{ \begin{bmatrix}-1 & -1 \\0 & -1 \end{bmatrix}, \begin{bmatrix}0 & 1 \\-1 & 0 \end{bmatrix} \right\}$ generates $\SL_2(\mathbb{Z})$ as a group. Since the map $\SL_2(\mathbb{Z}) \to \SL_2(\mathbb{Z}/N\mathbb{Z})$ obtained by reducing the matrix entries modulo~$N$ is a surjective group homomorphism. Then $\left\{\begin{bmatrix}-1 & -1 \\0 & -1 \end{bmatrix}, \begin{bmatrix}0 & 1 \\-1 & 0 \end{bmatrix} \right\} \mod N$ also generates $\SL_2(\mathbb{Z}/N\mathbb{Z})$ as a group.

Consider $\langle S' \rangle$, a semigroup generated by $S'$. Since $\SL_2(\bbZ/N\bbZ)$ is finite, $\langle S' \rangle$ is a finite closed subset of $\SL_2(\bbZ/N\bbZ)$, so it is a subgroup. Note that $\langle S' \rangle$ contains both generators $\begin{bmatrix}0 & 1 \\-1 & 0 \end{bmatrix}$ and $\begin{bmatrix}-1 & -1 \\0 & -1 \end{bmatrix}=\begin{bmatrix}0 & 1 \\-1 & 1 \end{bmatrix}\begin{bmatrix}0 & 1 \\-1 & 1 \end{bmatrix}\begin{bmatrix}0 & 1 \\-1 & 0 \end{bmatrix}$ of $\SL_2(\mathbb{Z}/N\mathbb{Z})$. Hence $\langle S' \rangle = \SL_2(\mathbb{Z}/N\mathbb{Z})$.
\end{proof}

This lemma shows that every element of $\SL_2(\bbZ/N\bbZ)$ can be written in at least one way as a finite word with letters in $S'$.

Next, we establish a way to determine if words are in $\bar{\mathcal{A}}$. For $w \in F_N$ with $\pi(w) = \begin{bmatrix} a & b \\c & d \end{bmatrix} \in \SL_2(\bbZ/N\bbZ)$, we note that
\begin{align*}
w \in \bar{\mathcal{A}} 
&\Leftrightarrow \begin{bmatrix} 1 \\ 0 \end{bmatrix}  = \begin{bmatrix} a & b \\c & d \end{bmatrix} \begin{bmatrix} 0 \\ 1 \end{bmatrix} = \begin{bmatrix} b \\ d \end{bmatrix} \Leftrightarrow d=0 \\ 
& \Leftrightarrow \pi(w) = \begin{bmatrix}a & b \\ -b^{-1} & 0 \end{bmatrix} \; \text{with $a \in \bbZ/N\bbZ, b \in (\bbZ/N\bbZ)^{\times}$}.
\end{align*}
Hence we have shown

\begin{thm}
Let $N$ be a positive integer. Then
\[
\bar{\mathcal{A}} = \left\{w \in F_N : \pi(w)=\begin{bmatrix}a & b \\-b^{-1} & 0 \end{bmatrix} \text{for some} \ a \in \bbZ/N\bbZ, b \in (\bbZ/N\bbZ)^{\times}\right\}.
\]
\end{thm}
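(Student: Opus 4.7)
The plan is to unravel the definition of $\bar{\mathcal{A}}$ together with the determinant-one constraint, which is really just a direct computation. Writing $\pi(w) = \begin{bmatrix} a & b \\ c & d \end{bmatrix}$ for an arbitrary $w \in F_N$, I would first calculate $\pi(w)\begin{bmatrix}0\\1\end{bmatrix} = \begin{bmatrix}b\\d\end{bmatrix}$. The condition $w \in \bar{\mathcal{A}}$ asserts that this column represents the same projective class as $\begin{bmatrix}1\\0\end{bmatrix}$ under $\sim'$, which by the definition of $\sim'$ is equivalent to requiring $d = 0$ together with $b \in (\bbZ/N\bbZ)^\times$.

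Next, I would feed $d = 0$ into the relation $\det \pi(w) = ad - bc = 1$ to obtain $-bc = 1$. This simultaneously forces $c = -b^{-1}$ and re-confirms that $b$ is a unit, yielding the desired shape $\pi(w) = \begin{bmatrix} a & b \\ -b^{-1} & 0 \end{bmatrix}$ with $a \in \bbZ/N\bbZ$ and $b \in (\bbZ/N\bbZ)^\times$. For the reverse inclusion, any matrix of that shape visibly has determinant $1$ and so belongs to $\SL_2(\bbZ/N\bbZ)$; by Lemma \ref{lemsemi} it is realized as $\pi(w)$ for some $w \in F_N$, and reading off its action on $\begin{bmatrix}0\\1\end{bmatrix}$ places $w$ in $\bar{\mathcal{A}}$.

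I expect no real obstacle, since both directions reduce to bare computations already sketched in the paragraph preceding the theorem. The only small subtlety is to read $\pi(w)\begin{bmatrix}0\\1\end{bmatrix} = \begin{bmatrix}1\\0\end{bmatrix}$ projectively — as an identity of $\sim'$-classes rather than literal vector equality — which is what allows $b$ to range over all units instead of being pinned to $1$.
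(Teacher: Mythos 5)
Your proof is correct and takes essentially the same route as the paper: compute $\pi(w)\begin{bmatrix}0\\1\end{bmatrix}=\begin{bmatrix}b\\d\end{bmatrix}$, read the defining condition projectively as equality of $\sim'$-classes (so it amounts to $d=0$ with $b\in(\bbZ/N\bbZ)^\times$), and use $ad-bc=1$ to force $c=-b^{-1}$, the converse being the same computation read backwards. The only cosmetic difference is your appeal to Lemma \ref{lemsemi} in the reverse inclusion, which is unnecessary since the right-hand side already consists of words $w\in F_N$, but this does no harm.
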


For $l \ge 0$, we write $F_N^l$ for the set of words over $\bbZ/N\bbZ$ of length $l$, $\bar{\mathcal{A}}^l = F_N^l \cap \bar{\mathcal{A}}$ and $\bar{\mathcal{C}}^l = F_N^l \cap \bar{\mathcal{C}}$. We first study the insertion and deletion in $\bar{\mathcal{A}}$.

\begin{thm}
If $w \in \bar{\mathcal{A}}^l$, then $\alpha w \in \bar{\mathcal{C}}^{l+1}$ and $w\alpha \in \bar{\mathcal{C}}^{l+1}$ for every $\alpha \in \mathbb{Z}/N\mathbb{Z}$.
\end{thm}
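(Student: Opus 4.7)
The plan is to directly compute $\pi(w\alpha)$ and $\pi(\alpha w)$ using the matrix description of $\bar{\mathcal{A}}$ established in the preceding theorem, and to read off why the $(2,2)$-entry of each product must be a unit, hence nonzero, so that neither insertion can land in $\bar{\mathcal{A}}$.

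Concretely, I would start from the fact that $w \in \bar{\mathcal{A}}^l$ forces
\[
\pi(w) = \begin{bmatrix} a & b \\ -b^{-1} & 0 \end{bmatrix}
\]
for some $a \in \bbZ/N\bbZ$ and $b \in (\bbZ/N\bbZ)^\times$. For the right insertion, I would multiply on the right by $\pi(\alpha) = \begin{bmatrix} 0 & 1 \\ -1 & \alpha \end{bmatrix}$, obtaining a matrix whose $(2,2)$-entry is $-b^{-1}$. For the left insertion, I would multiply on the left and observe that the new $(2,2)$-entry is $-b$. The key observation in both cases is that since $b \in (\bbZ/N\bbZ)^\times$, so are $-b$ and $-b^{-1}$; in particular both are nonzero in $\bbZ/N\bbZ$ (the statement is vacuous when $N = 1$, so I can safely assume $N \ge 2$). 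By the characterization of $\bar{\mathcal{A}}$ in the preceding theorem, membership in $\bar{\mathcal{A}}$ requires the $(2,2)$-entry to be $0$. Since neither $\pi(w\alpha)$ nor $\pi(\alpha w)$ satisfies this, both words lie in $\bar{\mathcal{C}}^{l+1}$.

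There is essentially no obstacle: the argument is a two-line matrix computation plus the trivial remark that a unit is never zero. The only point that requires the slightest care is noting that the characterization of $\bar{\mathcal{A}}$ asks not merely that the bottom-right entry vanish but also that the off-diagonal entry $b$ be a unit; the former alone already fails in our computation, so we never have to worry about the latter. This makes the proof strictly easier than the corresponding Theorem \ref{rightadd} over a field, where the analogous calculation had to exclude an equality in $k$ rather than a unit being zero.
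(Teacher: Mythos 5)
Your proposal is correct and follows essentially the same route as the paper: compute $\pi(\alpha w)$ and $\pi(w\alpha)$ from $\pi(w)=\begin{bmatrix} a & b \\ -b^{-1} & 0\end{bmatrix}$ and observe that the resulting $(2,2)$-entries $-b$ and $-b^{-1}$ are units, hence nonzero, so neither word satisfies the $d=0$ criterion for $\bar{\mathcal{A}}$. The extra remarks (the $N=1$ degeneracy, the unit condition on $b$ being automatic) are harmless refinements of the paper's identical two-line computation.
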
 
\begin{proof}
Assume that $w \in \bar{\mathcal{A}}^l$ and let $\alpha \in \bbZ/N\bbZ$. Then $\pi(w)=\begin{bmatrix} a & b \\-b^{-1} & 0 \end{bmatrix}$ where $a \in \mathbb{Z}/N\mathbb{Z}$ and $b \in (\mathbb{Z}/N\mathbb{Z})^\times$. Thus 
\[
\pi(\alpha w)=\pi(\alpha)\pi(w)=\begin{bmatrix} 0 & 1 \\-1 & \alpha \end{bmatrix}\begin{bmatrix} a & b \\-b^{-1} & 0 \end{bmatrix}=\begin{bmatrix} -b^{-1} & 0 \\-a-\alpha b^{-1} & -b \end{bmatrix}
\]
and 
\[
\pi(w\alpha)=\pi(w)\pi(\alpha)=\begin{bmatrix} a & b \\-b^{-1} & 0 \end{bmatrix}\begin{bmatrix} 0 & 1 \\-1 & \alpha \end{bmatrix}=\begin{bmatrix} -b & a+\alpha b \\0 & -b^{-1} \end{bmatrix}.
\] 
Since $b \ne 0$, $\alpha w \in \bar{\mathcal{C}}^{l+1}$ and $w\alpha \in \bar{\mathcal{C}}^{l+1}$.
\end{proof}
 
\begin{thm} \label{zeroeffect}
Let $w \in \bar{\mathcal{C}}^l$ with $\pi(w)=\begin{bmatrix} a & b \\ c & d \end{bmatrix} \in \SL_2(\mathbb{Z}/N\mathbb{Z})$. \\ 
{\rm (i)} If $\gcd(d,N) = 1$, i.e., $d \in (\mathbb{Z}/N\mathbb{Z})^\times$, then there exist unique $\alpha ,\beta \in \mathbb{Z}/N\mathbb{Z}$ such that $\alpha w \in \bar{\mathcal{A}}^{l+1}$ and $w\beta \in \bar{\mathcal{A}}^{l+1}$. \\
{\rm (ii)} If $\gcd(d,N)>1$, then $\alpha w \in \bar{\mathcal{C}}^{l+1}$ and $w\beta \in \bar{\mathcal{C}}^{l+1}$ for all $\alpha, \beta \in \mathbb{Z}/N\mathbb{Z}$.
\end{thm}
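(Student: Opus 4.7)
The plan is to imitate the computation made for Theorem \ref{rightadd} in the field case, then exploit the determinant relation $ad-bc=1$ to handle zero divisors. First I would record the two products
\[
\pi(\alpha w)=\begin{bmatrix} c & d \\ -a+\alpha c & -b+\alpha d \end{bmatrix}, \qquad
\pi(w\beta)=\begin{bmatrix} -b & a+\beta b \\ -d & c+\beta d \end{bmatrix},
\]
so that the condition for $\alpha w\in\bar{\mathcal{A}}^{l+1}$ (respectively $w\beta\in\bar{\mathcal{A}}^{l+1}$) reduces, by the characterization $d=0$ established just before the statement, to the single modular equation
\[
\alpha d \equiv b \pmod{N} \qquad (\text{resp.\ } \beta d \equiv -c \pmod{N}).
\]

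For part (i), with $d\in(\mathbb{Z}/N\mathbb{Z})^\times$, each equation has the unique solution $\alpha=bd^{-1}$ and $\beta=-cd^{-1}$. I would then substitute back to confirm that the resulting matrix has the correct shape, e.g.\ that the bottom-left entry of $\pi(\alpha w)$ becomes $-a+bcd^{-1}=(bc-ad)d^{-1}=-d^{-1}$ by the determinant relation, and similarly for the other side; this shows $\alpha w$ and $w\beta$ indeed lie in $\bar{\mathcal{A}}^{l+1}$.

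For part (ii), the key is to rule out any solution of $\alpha d\equiv b$ or $\beta d\equiv -c$ modulo $N$. Setting $g=\gcd(d,N)>1$, the standard linear congruence criterion says a solution requires $g\mid b$ (resp.\ $g\mid c$). Reducing the identity $ad-bc\equiv 1\pmod{N}$ modulo $g$ and using $g\mid d$ yields $bc\equiv -1\pmod{g}$; thus $g\mid b$ or $g\mid c$ would force $g\mid 1$, a contradiction. Hence no admissible $\alpha$ or $\beta$ exists, and both $\alpha w$ and $w\beta$ remain in $\bar{\mathcal{C}}^{l+1}$.

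The only delicate point, and the step I would double-check carefully, is the argument in part (ii): one must be sure to use the global determinant equation $ad-bc=1$ in $\mathbb{Z}/N\mathbb{Z}$ rather than in some quotient, reduce it modulo the proper divisor $g$ of $N$, and only then conclude $bc\equiv -1\pmod g$. Everything else is a direct matrix multiplication and a routine application of the fact that a linear congruence $\alpha d\equiv e\pmod N$ is solvable iff $\gcd(d,N)\mid e$, with a unique solution when $d$ is a unit.
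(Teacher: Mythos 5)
Your proposal is correct and follows essentially the same route as the paper: reduce membership in $\bar{\mathcal{A}}$ to the linear congruences $\alpha d\equiv b$ and $\beta d\equiv -c \pmod N$, solve them uniquely when $d$ is a unit, and use the determinant relation $ad-bc=1$ to show that $\gcd(d,N)>1$ makes them unsolvable. The only difference is cosmetic: you treat the right insertion and the mod-$g$ reduction explicitly (arguably a touch more carefully than the paper's "$\gcd(d,N)\le\gcd(d,b)=1$" phrasing), where the paper proves the claim "$\gcd(d,N)\mid b\Leftrightarrow\gcd(d,N)=1$" once and leaves the rest to the reader.
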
 
\begin{proof}
We first note that for $\alpha \in \mathbb{Z}/N\mathbb{Z}$, $$\pi(\alpha w)=\pi(\alpha)\pi(w)=\begin{bmatrix} 0 & 1 \\-1 & \alpha \end{bmatrix}\begin{bmatrix} a & b \\c & d \end{bmatrix}=\begin{bmatrix} c & d \\-a+\alpha c & -b+\alpha d \end{bmatrix}.$$ Then $\alpha w \in \bar{\mathcal{A}}^{l+1} \Leftrightarrow -b+\alpha d \equiv 0 \mod N$. 
This congruence equation has a solution $\Leftrightarrow \gcd(d,N)|b$. We claim that $\gcd(d,N)|b$ is equivalent to $\gcd(d,N)=1$ and the theorem can easily be deduced. It is obvious that $\gcd(d,N)=1$ implies $\gcd(d,N)|b$. If $\gcd(d,N)|b$, then $\gcd(d,N)$ is a common divisor of $d$ and $b$. Since $ad - bc = 1$, $\gcd(d,N)\leq \gcd(d,b)=1$, so $\gcd(d,N)=1$.  Hence we have the claim.
\end{proof}
 
\begin{thm}
If $\alpha_1 \dots \alpha_l \in \bar{\mathcal{A}}^l$, then $\alpha_2 \dots \alpha_l$ and $\alpha_1 \dots \alpha_{l-1} \in \bar{\mathcal{C}}^{l-1}$.
\end{thm}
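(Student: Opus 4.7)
The plan is to invoke the characterization established in the preceding theorem: $w \in \bar{\mathcal{A}}$ if and only if $\pi(w) = \begin{bmatrix} a & b \\ -b^{-1} & 0 \end{bmatrix}$ for some $a \in \bbZ/N\bbZ$ and $b \in (\bbZ/N\bbZ)^\times$. In particular, membership in $\bar{\mathcal{A}}$ is governed entirely by whether the bottom-right entry of $\pi(w)$ vanishes. So the strategy is to take $w = \alpha_1 \dots \alpha_l \in \bar{\mathcal{A}}^l$, write $\pi(w)$ in the above form, and then directly compute $\pi(\alpha_1 \dots \alpha_{l-1})$ and $\pi(\alpha_2 \dots \alpha_l)$ using one-letter multiplications on the right and left, checking in each case that the new $(2,2)$ entry is a unit (hence nonzero).

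For the right deletion, I would use $\pi(\alpha_1 \dots \alpha_{l-1}) = \pi(w)\,\pi(\alpha_l)^{-1}$ with
\[
\pi(\alpha_l)^{-1} = \begin{bmatrix} \alpha_l & -1 \\ 1 & 0 \end{bmatrix}.
\]
Multiplying out $\begin{bmatrix} a & b \\ -b^{-1} & 0 \end{bmatrix} \begin{bmatrix} \alpha_l & -1 \\ 1 & 0 \end{bmatrix}$, the bottom-right entry becomes $b^{-1}$, which is a unit and therefore nonzero in $\bbZ/N\bbZ$. By the characterization above, this forces $\alpha_1 \dots \alpha_{l-1} \in \bar{\mathcal{C}}^{l-1}$.

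For the left deletion, I would similarly compute $\pi(\alpha_2 \dots \alpha_l) = \pi(\alpha_1)^{-1}\,\pi(w)$ via
\[
\begin{bmatrix} \alpha_1 & -1 \\ 1 & 0 \end{bmatrix} \begin{bmatrix} a & b \\ -b^{-1} & 0 \end{bmatrix}.
\]
The bottom-right entry of this product is $b$, which again is a unit and thus nonzero, so $\alpha_2 \dots \alpha_l \in \bar{\mathcal{C}}^{l-1}$.

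There is essentially no substantive obstacle here; the whole statement reduces to two short matrix multiplications. The only place one has to be a little careful is in recording that ``unit'' implies ``nonzero'' in $\bbZ/N\bbZ$ (which is immediate, since $0$ is never a unit when $N \geq 2$, and the case $N = 1$ is trivial because $\SL_2(\bbZ/N\bbZ)$ is itself trivial). Thus the proof is entirely computational, mirroring in structure the first half of Theorem~\ref{cancel} but simplified by the fact that the lower-right entry of $\pi(w)$ is actually $0$ in the present setting.
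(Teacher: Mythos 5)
Your proposal is correct and follows essentially the same route as the paper: write $\pi(w)=\begin{bmatrix} a & b \\ -b^{-1} & 0 \end{bmatrix}$ with $b$ a unit, multiply by $\pi(\alpha_l)^{-1}$ on the right and $\pi(\alpha_1)^{-1}$ on the left, and observe that the resulting $(2,2)$ entries are $b^{-1}$ and $b$ respectively, hence nonzero. The computations and the conclusion match the paper's proof exactly.
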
 
\begin{proof}
Assume that $\alpha_1\ldots\alpha_l \in \bar{\mathcal{A}}^l$. Then $\pi(\alpha_1\ldots\alpha_l)=\begin{bmatrix} a & b \\ -b^{-1} & 0 \end{bmatrix}$ for some $a \in \mathbb{Z}/N\mathbb{Z}$ and $b \in (\mathbb{Z}/N\mathbb{Z})^\times$. Thus
\[
\pi(\alpha_2\ldots\alpha_l)=\pi(\alpha_1)^{-1} \pi(\alpha_1\ldots\alpha_l) 
=\begin{bmatrix} \alpha_1 & -1 \\ 1 & 0 \end{bmatrix}\begin{bmatrix} a & b \\ -b^{-1} & 0 \end{bmatrix}
=\begin{bmatrix} \alpha_1 a+b^{-1} & \alpha_1 b \\ a & b \end{bmatrix}
\]
and 
\[
\pi(\alpha_1\ldots\alpha_{l-1})=\pi(\alpha_1\ldots\alpha_l)\pi(\alpha_l)^{-1}
=\begin{bmatrix} a & b \\ -b^{-1} & 0 \end{bmatrix}\begin{bmatrix} \alpha_l & -1 \\ 1 & 0 \end{bmatrix} =\begin{bmatrix} \alpha_l a+b &  -a \\ -\alpha_l b^{-1} & b^{-1} \end{bmatrix}.
\]
Since $b \ne 0$, $\alpha_2\ldots\alpha_l$ and $\alpha_1\ldots\alpha_{l-1}\in \bar{\mathcal{C}}^{l-1}$.
\end{proof}

\begin{remark}
We used to be able to derive the cardinalities of $\mathcal{A}_r$ and  $\mathcal{C}_r$ by knowing the properties given in the above three theorem. However, in the finite ring $\bbZ/N\bbZ$ case is not the same as the finite field $k$ case due to this ring contains zero divisors. This makes the words in $F_N$ behave differently as we have seen in Theorem \ref{zeroeffect}. 
\end{remark}

Another property of words in $\bar{\mathcal{A}}$ is given in the following theorem. This result will be used in the next section.

\begin{thm} \label{reverse}
$\alpha_1\alpha_2 \dots \alpha_l \in \bar{\mathcal{A}}^l$ if and only if $\alpha_l\alpha_{l-1} \dots \alpha_1 \in \bar{\mathcal{A}}^l$.
\end{thm}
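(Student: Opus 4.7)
The plan is to relate $\pi(\alpha_1\alpha_2\cdots\alpha_l)$ to $\pi(\alpha_l\alpha_{l-1}\cdots\alpha_1)$ via matrix transposition and then to invoke the $\bar{\mathcal{A}}$-characterization that $w \in \bar{\mathcal{A}}$ exactly when the $(2,2)$-entry of $\pi(w)$ vanishes.

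The key observation I would first establish is that, with $J = \begin{bmatrix} 1 & 0 \\ 0 & -1 \end{bmatrix} \in \SL_2(\mathbb{Z}/N\mathbb{Z})$ (which is self-inverse), one has
\[
\pi(\alpha)^T = J\,\pi(\alpha)\,J^{-1} \qquad \text{for every } \alpha \in \mathbb{Z}/N\mathbb{Z}.
\]
This is a single one-line check with the generator $\begin{bmatrix} 0 & 1 \\ -1 & \alpha \end{bmatrix}$.

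Once this identity is in hand, taking the transpose of a product reverses the order of the factors, and each $J^{-1} \cdot J$ telescopes in the middle, giving
\[
\pi(\alpha_1\cdots\alpha_l)^T = \pi(\alpha_l)^T \cdots \pi(\alpha_1)^T = J\,\pi(\alpha_l\alpha_{l-1}\cdots\alpha_1)\,J^{-1}.
\]
Rearranging yields $\pi(\alpha_l\alpha_{l-1}\cdots\alpha_1) = J^{-1}\pi(\alpha_1\alpha_2\cdots\alpha_l)^T J$. Because $J$ is diagonal, the $(2,2)$-entry of a matrix is invariant under conjugation by $J$, and transposition obviously preserves the $(2,2)$-entry, so
\[
\bigl[\pi(\alpha_l\cdots\alpha_1)\bigr]_{2,2} = \bigl[\pi(\alpha_1\cdots\alpha_l)\bigr]_{2,2}.
\]
By the $\bar{\mathcal{A}}$-characterization recorded just before this theorem, each side is $0$ simultaneously, which is exactly the desired equivalence.

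The only step that requires any thought is finding the conjugating matrix $J$ satisfying $\pi(\alpha)^T = J\pi(\alpha)J^{-1}$, and with the form of $\pi(\alpha)$ being so restrictive this is almost forced: $J$ must centralize the rotation entries while flipping signs to pass from $\pi(\alpha)$ to its transpose, forcing $J=\mathrm{diag}(1,-1)$ up to scalar. Everything after that is bookkeeping, so I do not foresee a real obstacle.
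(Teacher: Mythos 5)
Your proof is correct and is essentially the paper's own argument: the paper conjugates by the swap matrix $\sigma=\left[\begin{smallmatrix}0&1\\1&0\end{smallmatrix}\right]$ and uses inversion where you conjugate by $J=\mathrm{diag}(1,-1)$ and use transposition, but both routes produce the identical formula $\pi(\alpha_l\cdots\alpha_1)=\left[\begin{smallmatrix}a&-c\\-b&d\end{smallmatrix}\right]$ and conclude from the unchanged $(2,2)$-entry together with the criterion $w\in\bar{\mathcal{A}}\Leftrightarrow d=0$. One trivial slip: $\det J=-1$, so $J$ lies in $\GL_2(\mathbb{Z}/N\mathbb{Z})$ rather than $\SL_2(\mathbb{Z}/N\mathbb{Z})$ in general, which is harmless since your argument only needs $J$ to be invertible.
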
 
\begin{proof}
Consider $\sigma = \begin{bmatrix} 0 & 1 \\ 1 & 0 \end{bmatrix}$ and $\alpha_1\ldots\alpha_l \in F_N$ with $\pi(\alpha_1\ldots\alpha_l)=\begin{bmatrix} a & b \\ c & d \end{bmatrix} \in \SL_2(\mathbb{Z}/N\mathbb{Z})$. 
Since $\sigma\begin{bmatrix} w & x \\ y & z \end{bmatrix}\sigma=\begin{bmatrix} z & y \\ x & w \end{bmatrix}$ for all $w,x,y,z\in\mathbb{Z}/N\mathbb{Z}$ and $\sigma = \sigma^{-1}$, we have
\begin{align*}
\begin{bmatrix} d & c \\ b & a \end{bmatrix}
&=\sigma\pi(\alpha_1 \alpha_2 \dots \alpha_l)\sigma =(\sigma\pi(\alpha_1)\sigma)(\sigma\pi(\alpha_2)\sigma)\dots(\sigma\pi(\alpha_l)\sigma) \\
&=\begin{bmatrix} \alpha_1 & -1 \\ 1 & 0 \end{bmatrix} \dots\begin{bmatrix} \alpha_l & -1 \\ 1 & 0 \end{bmatrix} 
=\left(\begin{bmatrix} 0 &  1\\ -1 & \alpha_l \end{bmatrix}\ldots\begin{bmatrix} 0 &  1\\ -1 & \alpha_1 \end{bmatrix}\right)^{-1} \\
&=\pi(\alpha_l\ldots\alpha_1)^{-1},
\end{align*}
so $\pi(\alpha_l\dots\alpha_1)=\begin{bmatrix} a &  -c\\ -b & d \end{bmatrix}$.
Thus $\alpha_1\alpha_2\dots\alpha_l \in \bar{\mathcal{A}}^l \Leftrightarrow d=0 \Leftrightarrow \alpha_l\alpha_{l-1} \ldots \alpha_1\in ~\bar{\mathcal{A}}^l$.
\end{proof}

The partition $\bar{\mathcal{A}}$ and $\bar{\mathcal{C}}$ of $F_k$ also induces the equivalence relation $\sim$ on $F_k$. We record some relationships between two words in the next theorem.

\begin{thm}
Let $x \in F_N$ and $\beta \in \mathbb{Z}/N\mathbb{Z}$. We have \\
{\rm(i)} If $\alpha \in (\mathbb{Z}/N\mathbb{Z})^{\times}$, then $\alpha\beta x \sim (\beta -\alpha^{-1}) x$. \\
{\rm(ii)} $0\beta x \sim x$.
\end{thm}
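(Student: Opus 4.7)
The plan is to unpack the definition of $\sim$ and reduce each assertion to a direct matrix computation. Recall that $x \sim y$ means that $x$ and $y$ lie on the same side of the partition $\bar{\mathcal{A}} \sqcup \bar{\mathcal{C}} = F_N$, and that by the characterization derived earlier, a word $w$ is in $\bar{\mathcal{A}}$ precisely when the bottom-right entry of $\pi(w)$ is $0$. So for both (i) and (ii) it suffices to write $\pi(x) = \begin{bmatrix} a & b \\ c & d \end{bmatrix}$ and verify that the bottom-right entries of $\pi$ applied to the two words in question vanish under the same condition on $a,b,c,d$.

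For (i), I would first compute
\[
\pi(\alpha)\pi(\beta) = \begin{bmatrix} 0 & 1 \\ -1 & \alpha \end{bmatrix}\begin{bmatrix} 0 & 1 \\ -1 & \beta \end{bmatrix} = \begin{bmatrix} -1 & \beta \\ -\alpha & \alpha\beta - 1 \end{bmatrix},
\]
so that $\pi(\alpha\beta x)$ has bottom row $\bigl(-\alpha a + (\alpha\beta-1)c,\; -\alpha b + (\alpha\beta-1)d\bigr)$. Thus $\alpha\beta x \in \bar{\mathcal{A}}$ iff $\alpha b = (\alpha\beta - 1)d$. Since $\alpha \in (\mathbb{Z}/N\mathbb{Z})\tx$, this is equivalent (multiply by $\alpha^{-1}$) to $b = (\beta - \alpha^{-1})d$. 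On the other hand, with $\gamma = \beta - \alpha^{-1}$ a direct computation gives
\[
\pi(\gamma x) = \begin{bmatrix} c & d \\ -a + \gamma c & -b + \gamma d \end{bmatrix},
\]
and the bottom-right entry vanishes exactly when $b = \gamma d = (\beta - \alpha^{-1})d$. The two conditions coincide, so $\alpha\beta x$ and $(\beta-\alpha^{-1})x$ are either both in $\bar{\mathcal{A}}$ or both in $\bar{\mathcal{C}}$, which is what $\sim$ means.

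For (ii), the computation is even shorter: one checks
\[
\pi(0)\pi(\beta) = \begin{bmatrix} -1 & \beta \\ 0 & -1 \end{bmatrix},
\]
and then $\pi(0\beta x)$ has bottom row $(-c, -d)$. Hence $0\beta x \in \bar{\mathcal{A}}$ iff $d = 0$ iff $x \in \bar{\mathcal{A}}$, which gives $0\beta x \sim x$. The only place unit-ness of $\alpha$ really enters is in (i), where it is needed to form $\alpha^{-1}$; there is no genuine obstacle beyond tracking the $2 \times 2$ matrix products carefully.
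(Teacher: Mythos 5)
Your proposal is correct and follows essentially the same route as the paper: both reduce membership in $\bar{\mathcal{A}}$ to the vanishing of the bottom-right entry of $\pi(w)$ and compare the resulting conditions by direct $2\times 2$ matrix computation, with the unit hypothesis on $\alpha$ used exactly where the paper uses it (to cancel $\alpha$ and produce $\beta-\alpha^{-1}$). The only cosmetic difference is that you premultiply $\pi(\alpha)\pi(\beta)$ before applying it to $\pi(x)$, whereas the paper writes out the triple product directly.
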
 
\begin{proof} 
Let $\pi(x)=\begin{bmatrix}a & b \\c & d \end{bmatrix} \in \SL_2(\mathbb{Z}/N\mathbb{Z})$. \\
(i) Assume that $\alpha \in (\mathbb{Z}/N\mathbb{Z})^{\times}$. Then
\[
\pi(\alpha\beta x)=\begin{bmatrix}0 & 1 \\-1 & \alpha \end{bmatrix}\begin{bmatrix}0 & 1 \\-1 & \beta \end{bmatrix}\begin{bmatrix}a & b \\c & d \end{bmatrix}=\begin{bmatrix} -a+\beta c & -b+\beta d \\ -\alpha a-c+\alpha\beta c & -\alpha b-d+\alpha\beta d \end{bmatrix}
\]
and
\[
\pi((\beta -\alpha^{-1}) x)=\begin{bmatrix}0 & 1 \\-1 & \beta -\alpha^{-1} \end{bmatrix}\begin{bmatrix} a & b \\ c & d\end{bmatrix} = \begin{bmatrix}c & d \\-a+(\beta -\alpha^{-1}) c & -b+(\beta -\alpha^{-1}) d\end{bmatrix}.
\]
Thus
\begin{align*}
\alpha\beta x \in \bar{\mathcal{A}} &\Leftrightarrow -\alpha b-d+\alpha\beta d = 0\\
&\Leftrightarrow -b+(\beta -\alpha^{-1}) d = 0\\
&\Leftrightarrow (\beta -\alpha^{-1}) x \in \bar{\mathcal{A}},
\end{align*}
so $\alpha\beta x \sim (\beta -\alpha^{-1}) x$. \\
(ii) Since $\pi(0 \beta x) = \begin{bmatrix}0 & 1 \\-1 & 0 \end{bmatrix}\begin{bmatrix}0 & 1 \\-1 & \beta \end{bmatrix}\begin{bmatrix}a & b \\c & d \end{bmatrix} = \begin{bmatrix} -a+\beta c & -b+\beta d \\ -c & -d \end{bmatrix}$, $0 \beta x \in~\bar{\mathcal{A}}$  $\Leftrightarrow  d = 0 \Leftrightarrow x \in \bar{\mathcal{A}}$, so $0\beta x \sim x$.
\end{proof}

\begin{remark}
The above theorem yields partial answers (again due to zero divisors in $\bbZ/N\bbZ$) for determination of words into classes $\bar{\mathcal{A}}$ and $\bar{\mathcal{C}}$. However, a good mathematical software such as Maple${\sf ^{TM}}$ can easily compute the product of $2 \times 2$ matrices modulo positive integer~$N$. This allows us to directly distinguish words in $F_N$. 
\end{remark}

\bigskip

\section{More on $\bar{\mathcal{A}}$}

We concentrate more on $\bar{\mathcal{A}}$ and record its further parallel properties to Bacher's in this last section. This work includes unique factorization, predecessors, successors and periodic words. 

In order to prove the fact about unique factorization on $\bar{\mathcal{A}}$, we start with the following lemma.

\begin{lemma} \label{one}
{\rm (i)} If $w,w' \in \bar{\mathcal{A}}$ then $ww' \in \bar{\mathcal{C}}$ and $w\alpha w' \in \bar{\mathcal{A}}$ for any
$\alpha \in \mathbb{Z}/N \mathbb{Z}$. \\
{\rm (ii)} If exactly one of $w,w'$ is an element of $\bar{\mathcal{A}}$ then $w\alpha w' \in \bar{\mathcal{C}}$ for any
$\alpha \in \mathbb{Z}/N \mathbb{Z}$. 
\end{lemma}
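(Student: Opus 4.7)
The plan is to prove both parts by direct matrix computation, using the characterization of $\bar{\mathcal{A}}$ from the theorem following Lemma~\ref{lemsemi}: namely, $w \in \bar{\mathcal{A}}$ iff $\pi(w) = \begin{bmatrix} a & b \\ -b^{-1} & 0 \end{bmatrix}$ for some $a \in \mathbb{Z}/N\mathbb{Z}$ and some $b \in (\mathbb{Z}/N\mathbb{Z})^\times$, and the fact that $w \in \bar{\mathcal{C}}$ simply means the bottom right entry of $\pi(w)$ is nonzero in $\mathbb{Z}/N\mathbb{Z}$.

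For part (i), I would write $\pi(w) = \begin{bmatrix} a & b \\ -b^{-1} & 0 \end{bmatrix}$ and $\pi(w') = \begin{bmatrix} a' & b' \\ -b'^{-1} & 0 \end{bmatrix}$ with $b, b' \in (\mathbb{Z}/N\mathbb{Z})^\times$. A direct multiplication shows that the $(2,2)$ entry of $\pi(ww') = \pi(w)\pi(w')$ equals $-b^{-1}b'$, a unit, so $ww' \in \bar{\mathcal{C}}$. For the second claim, I would compute $\pi(w\alpha w') = \pi(w)\pi(\alpha)\pi(w')$ and check that its $(2,2)$ entry works out to $0$; this follows because the $0$ in the bottom row of $\pi(w)$ annihilates the lower half of the intermediate product $\pi(\alpha)\pi(w')$ in exactly the right way. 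Concluding $w\alpha w' \in \bar{\mathcal{A}}$ also requires observing that the $(2,1)$ entry is a unit, but this is automatic from $ad - bc = 1$ in $\SL_2(\mathbb{Z}/N\mathbb{Z})$ once $d = 0$ is known.

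For part (ii), I would split into two subcases. If $w \in \bar{\mathcal{A}}$ and $w' \in \bar{\mathcal{C}}$, write $\pi(w) = \begin{bmatrix} a & b \\ -b^{-1} & 0 \end{bmatrix}$ with $b$ a unit, and $\pi(w') = \begin{bmatrix} a' & b' \\ c' & d' \end{bmatrix}$ with $d' \ne 0$. Then a direct computation of $\pi(w\alpha w')$ shows its $(2,2)$ entry is $-b^{-1}d'$, which is nonzero because $b$ is a unit and $d' \ne 0$, so $w\alpha w' \in \bar{\mathcal{C}}$. The other subcase, $w \in \bar{\mathcal{C}}$ and $w' \in \bar{\mathcal{A}}$, is symmetric: one obtains $(2,2)$ entry $-db'$ where $d \ne 0$ and $b'$ is a unit.

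The hardest part is really just bookkeeping: there is no conceptual obstacle, only the need to track the $(2,2)$ entries accurately and to exploit the fact that $b$ (or $b'$) being a unit means multiplying by it preserves the zero/nonzero status of the other factor. The main subtlety, one that did not arise in the field case, is that in $\mathbb{Z}/N\mathbb{Z}$ the product of two nonzero elements can be zero, so one must insist that at least one factor in each critical product be a unit. Fortunately the definition of $\bar{\mathcal{A}}$ guarantees $b \in (\mathbb{Z}/N\mathbb{Z})^\times$, so every such product of the form (unit)$\cdot$(nonzero) that arises is automatically nonzero, and the proof goes through cleanly.
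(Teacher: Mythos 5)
Your proof is correct, and for part (i) and the first subcase of (ii) it is essentially identical to the paper's: the same matrix products, the same $(2,2)$-entries ($-b^{-1}b'$ for $ww'$, $0$ for $w\alpha w'$, $-b^{-1}d'$ in the mixed case), and the same use of the fact that a unit times a nonzero element of $\bbZ/N\bbZ$ is nonzero. The one place you diverge is the subcase $w\in\bar{\mathcal{C}}$, $w'\in\bar{\mathcal{A}}$: the paper does not redo the computation but instead invokes its reversal result (Theorem \ref{reverse}), reversing both words to land in the already-proved subcase and then reversing back. Your direct computation of
\[
\pi(w\alpha w')=\begin{bmatrix} a & b \\ c & d \end{bmatrix}
\begin{bmatrix} 0 & 1 \\ -1 & \alpha \end{bmatrix}
\begin{bmatrix} a' & b' \\ -b'^{-1} & 0 \end{bmatrix},
\]
whose $(2,2)$-entry is indeed $-db'$ with $d\ne 0$ and $b'$ a unit, is equally valid and slightly more self-contained, since it does not rely on the reversal theorem; the paper's route buys a shorter argument at the cost of that dependency. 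Your observation that the $(2,1)$-entry is automatically a unit once $d=0$ (from $ad-bc=1$) correctly justifies membership in $\bar{\mathcal{A}}$, matching the paper's characterization.
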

\begin{proof} 
To prove (i), let $\pi(w)=\begin{bmatrix}a & b \\-b^{-1} & 0 \end{bmatrix}$ and $\pi(w')=\begin{bmatrix}a' & b' \\-b'^{-1} & 0 \end{bmatrix}$ for some $a, a' \in \mathbb{Z}/N \mathbb{Z}$, $b,b' \in (\mathbb{Z}/N\mathbb{Z})^\times$, and let $\alpha \in \mathbb{Z}/N \mathbb{Z}$. Then
\[
\pi(ww') 
=\begin{bmatrix}a & b \\-b^{-1} & 0 \end{bmatrix}\begin{bmatrix}a' & b' \\-b'^{-1} & 0 \end{bmatrix}
=\begin{bmatrix} aa'-bb'^{-1} & ab' \\ -a'b^{-1} & -b^{-1}b' \end{bmatrix},
\]
and
\[
\pi(w\alpha w') 
= \begin{bmatrix}a & b \\-b^{-1} & 0 \end{bmatrix}\begin{bmatrix}0 & 1 \\-1 & \alpha \end{bmatrix} 
\begin{bmatrix}a' & b' \\-b'^{-1} & 0 \end{bmatrix}
=\begin{bmatrix}-ba'-ab'^{-1}-\alpha bb'^{-1} & -bb' \\(bb')^{-1} & 0 \end{bmatrix}.	
\]
Thus $w\alpha w' \in \bar{\mathcal{A}}$ for any $\alpha \in \mathbb{Z}/N \mathbb{Z}$. Since $b,b' \in (\mathbb{Z}/N\mathbb{Z})^\times$, $ww' \in \bar{\mathcal{C}}$.

To prove (ii), suppose that $w \in \bar{\mathcal{A}}$ and $w' \in \bar{\mathcal{C}}$ and let $\alpha \in \mathbb{Z}/N \mathbb{Z}$. Then $\pi(w) = \begin{bmatrix} a & b \\-b^{-1} & 0 \end{bmatrix}$ for some $a \in \mathbb{Z}/N\mathbb{Z}$ and $b \in (\mathbb{Z}/N\mathbb{Z})^\times$, and $\pi(w')=\begin{bmatrix} a' & b' \\c' & d' \end{bmatrix}$ in $\SL_2(\mathbb{Z}/N \mathbb{Z})$ with $d' \ne 0$. Thus
\[
{\small \pi(w\alpha w')=\begin{bmatrix}a & b \\-b^{-1} & 0 \end{bmatrix}\begin{bmatrix}0 & 1 \\-1 & \alpha \end{bmatrix}\begin{bmatrix}a' & b' \\c' & d' \end{bmatrix}=\begin{bmatrix}-ba'+ac'+\alpha bc' & -bb'+ad'+\alpha bd' \\-b^{-1}c' & -b^{-1}d' \end{bmatrix}.}
\]
Since $b \in (\mathbb{Z}/N\mathbb{Z})^\times$ and $d'\neq 0$, $w\alpha w' \in \bar{\mathcal{C}}$. For another case, let $w=\beta_1 \ldots \beta_m~\in~\bar{\mathcal{C}}$ and $w'=\beta'_1 \ldots \beta'_n \in \bar{\mathcal{A}}$ for some positive integers $m$ and $n$. By Theorem \ref{reverse}, we have $\beta'_n \ldots \beta'_1 \in \bar{\mathcal{A}}$ and $\beta_m \ldots \beta_1 \in \bar{\mathcal{C}}$. The previous proof shows that $$\beta'_n \ldots \beta'_1 \alpha \beta_m \ldots \beta_1 \in \bar{\mathcal{C}}.$$ Thus we get $w\alpha w' = \beta_1 \ldots \beta_m \alpha \beta'_1 \ldots \beta'_n \in \bar{\mathcal{C}}$ by Theorem \ref{reverse}. 
\end{proof}

Let $\mathcal{P}^l = \left\{ \alpha_1 \alpha_2 \ldots \alpha_l \in \bar{\mathcal{A}}^l : \alpha_1 \ldots \alpha_h \in \bar{\mathcal{C}}^h \, \text{for} \ h=1, \ldots , l-1 \right\}$ and $\mathcal{P}=\bigcup \mathcal{P}^l$.

\begin{thm} 
{\rm [Unique Factorization in $\bar{\mathcal{A}}$]} 
Let $w \in F_N$. Then  $w \in \bar{\mathcal{A}}$ if and only if $w$ can be written as 
\[
w = p_1\delta_1p_2\delta_2\ldots p_n\delta_np_{n+1} 
\]
for some $n \geq 0$ with $p_1, \ldots, p_{n+1} \in \mathcal{P}$ and $\delta_1, \ldots, \delta_n \in \mathbb{Z}/N\mathbb{Z}$. Moreover, such a factorization of $w \in \bar{\mathcal{A}}$ is unique.
\end{thm}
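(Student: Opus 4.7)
The plan is to prove both directions by induction, with Lemma~\ref{one} as the main engine. The construction repeatedly peels off the shortest prefix of $w$ that lies in $\bar{\mathcal{A}}$; by the definition of $\mathcal{P}$, such a shortest prefix is automatically primitive, which is exactly what makes both existence and uniqueness work.

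For the ``if'' direction, I would induct on $n$. The case $n=0$ gives $w=p_1\in\bar{\mathcal{A}}$ for free. For $n\ge 1$, set $u = p_1\delta_1 p_2\delta_2\ldots\delta_{n-1}p_n$; the inductive hypothesis (or the definition of $\mathcal{P}$ when $n=1$) gives $u\in\bar{\mathcal{A}}$, and then $w = u\,\delta_n\,p_{n+1}$ with both $u$ and $p_{n+1}$ in $\bar{\mathcal{A}}$, so Lemma~\ref{one}(i) delivers $w\in\bar{\mathcal{A}}$.

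For existence in the ``only if'' direction, I induct on $|w|$. Given $w\in\bar{\mathcal{A}}$, let $p_1$ be the shortest prefix of $w$ lying in $\bar{\mathcal{A}}$; such a prefix exists because $w$ itself qualifies, and it belongs to $\mathcal{P}$ by construction. If $p_1=w$ we stop with $n=0$. Otherwise write $w = p_1\delta_1 w'$, where $\delta_1$ is the letter immediately following $p_1$ and $w'$ is the remaining suffix. Two things must be checked. First, $w'\neq\varepsilon$: if $w'=\varepsilon$, then $w=p_1\delta_1$, and since $p_1\in\bar{\mathcal{A}}$ the right-insertion property (appending any letter to an $\bar{\mathcal{A}}$-word lands in $\bar{\mathcal{C}}$) forces $w\in\bar{\mathcal{C}}$, contradicting $w\in\bar{\mathcal{A}}$. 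Second, $w'\in\bar{\mathcal{A}}$: if instead $w'\in\bar{\mathcal{C}}$, Lemma~\ref{one}(ii) applied to $p_1\in\bar{\mathcal{A}}$ and $w'\in\bar{\mathcal{C}}$ forces $p_1\delta_1 w'\in\bar{\mathcal{C}}$, again a contradiction. Since $|w'|<|w|$, the inductive hypothesis supplies a factorization $w'=p_2\delta_2\ldots p_{m+1}$, and prepending $p_1\delta_1$ produces the desired form for $w$.

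For uniqueness, suppose $w$ has two factorizations $p_1\delta_1\ldots p_{n+1}=q_1\eta_1\ldots q_{m+1}$. Both $p_1$ and $q_1$ are prefixes of $w$ lying in $\mathcal{P}$, hence one is a prefix of the other; but a primitive word admits no proper prefix in $\bar{\mathcal{A}}$, which rules out strict containment. Hence $p_1=q_1$, and comparing the next letter yields $\delta_1=\eta_1$; applying the inductive hypothesis to the shorter $\bar{\mathcal{A}}$-word $p_2\delta_2\ldots p_{n+1}=q_2\eta_2\ldots q_{m+1}$ finishes the argument. The main obstacle is the recursive step: one must simultaneously rule out $w'=\varepsilon$ and rule out $w'\in\bar{\mathcal{C}}$, since either would break the inductive descent. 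Both corner cases are closed by combining the right-insertion theorem with Lemma~\ref{one}(ii), which is precisely the role played by the preparatory Lemma~\ref{one}.
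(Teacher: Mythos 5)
Your proof is correct and follows essentially the same route as the paper: peel off the shortest prefix lying in $\bar{\mathcal{A}}$ (which is in $\mathcal{P}$ by minimality), use Lemma~\ref{one} to see the remaining suffix is again in $\bar{\mathcal{A}}$, and derive uniqueness from primitivity of each $p_i$. You merely make explicit the inductions and the corner cases (empty or $\bar{\mathcal{C}}$ suffix) that the paper leaves implicit.
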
 
\begin{proof}
Suppose that $w$ can be written as in this form. By Lemma \ref{one}, it is easy to see that $w \in \bar{\mathcal{A}}$. Conversely, assume that $w = \alpha_1 \alpha_2 \ldots \alpha_l \in \bar{\mathcal{A}}^l$. Then there is the smallest positive integer $s$ such that $\alpha_1 \ldots \alpha_s \in \bar{\mathcal{A}}$. Setting $p_1=\alpha_1\ldots\alpha_s \in \mathcal{P}$ and $\delta_1=\alpha_{s+1}$. Thus $\alpha_{s+2} \alpha_{s+3} \ldots \alpha_l$ must be in $\bar{\mathcal{A}}^{l-(s+1)}$ by Lemma \ref{one}. Repeating this process we get the sets $\{\delta_1, \ldots, \delta_n\} \subset \mathbb{Z}/N\mathbb{Z}$ and $\{p_1,\ldots,p_{n+1}\} \subset \mathcal{P}$ so that $w = p_1 \delta_1 p_2 \delta_2 \ldots p_n \delta_np_{n+1}$ for some $n \geq 0$. The smallest length of $p_i$ for each $i$ implies the uniqueness of this factorization.
\end{proof}

Given two words $w,w' \in F_N$ of the form
\[
w = \alpha_0\alpha_1\ldots\alpha_{l-1} \quad \text{and}  \quad w' = \alpha_1\alpha_2\ldots\alpha_l,
\] 
we call $w'$ an {\it immediate successor} of $w$ and $w$ an {\it immediate predecessor} of $w'$.

\begin{thm} \label{imm} 
Each element $w \in \bar{\mathcal{A}}^l$ has a unique immediate successor and a unique immediate predecessor in $\bar{\mathcal{A}}^l$.
\end{thm}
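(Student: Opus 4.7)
The plan is to reduce both assertions to Theorem \ref{zeroeffect}(i), applied to the one-letter-shorter word obtained from $w$ by deleting an end-letter. Write $w = \alpha_0 \alpha_1 \ldots \alpha_{l-1} \in \bar{\mathcal{A}}^l$, so that $\pi(w) = \begin{bmatrix} a & b \\ -b^{-1} & 0 \end{bmatrix}$ for some $a \in \mathbb{Z}/N\mathbb{Z}$ and $b \in (\mathbb{Z}/N\mathbb{Z})^\times$. Note that the immediate successors and predecessors of $w$ of length $l$ are precisely the words $\alpha_1\ldots\alpha_{l-1}\alpha_l$ and $\alpha_{-1}\alpha_0\ldots\alpha_{l-2}$, obtained by choosing a single new letter to append on one side.

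For the immediate successor I would first pass to $u := \alpha_1 \ldots \alpha_{l-1}$ by left-deletion and compute
\[
\pi(u) = \pi(\alpha_0)^{-1}\pi(w) = \begin{bmatrix} \alpha_0 & -1 \\ 1 & 0 \end{bmatrix}\begin{bmatrix} a & b \\ -b^{-1} & 0 \end{bmatrix} = \begin{bmatrix} \alpha_0 a + b^{-1} & \alpha_0 b \\ a & b \end{bmatrix}.
\]
The preceding deletion theorem guarantees $u \in \bar{\mathcal{C}}^{l-1}$, and the above calculation shows that the lower-right entry of $\pi(u)$ is the unit $b$. Hence Theorem \ref{zeroeffect}(i) applies to $u$ and produces exactly one $\alpha_l \in \mathbb{Z}/N\mathbb{Z}$ with $u\alpha_l \in \bar{\mathcal{A}}^l$. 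This is the unique immediate successor.

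For the immediate predecessor I would argue symmetrically from the right. A parallel computation gives
\[
\pi(\alpha_0 \ldots \alpha_{l-2}) = \pi(w)\pi(\alpha_{l-1})^{-1} = \begin{bmatrix} \alpha_{l-1} a + b & -a \\ -\alpha_{l-1}b^{-1} & b^{-1} \end{bmatrix},
\]
so $\alpha_0 \ldots \alpha_{l-2} \in \bar{\mathcal{C}}^{l-1}$ (by the right-deletion half of the same theorem) and its lower-right $\pi$-entry is the unit $b^{-1}$. Theorem \ref{zeroeffect}(i) then supplies the unique $\alpha_{-1} \in \mathbb{Z}/N\mathbb{Z}$ for which $\alpha_{-1}\alpha_0 \ldots \alpha_{l-2} \in \bar{\mathcal{A}}^l$.

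The entire content of the argument is the observation that the lower-right entry of the $\pi$-image of the shortened word is automatically a unit when $w \in \bar{\mathcal{A}}^l$, because it equals $\pm b^{\pm 1}$. I do not expect any substantive obstacle: this unit property is precisely what rescues us from the degenerate regime of Theorem \ref{zeroeffect}(ii), where $\gcd(d,N)>1$ would force \emph{every} left- or right-insertion to land in $\bar{\mathcal{C}}$ and no valid successor or predecessor would exist.
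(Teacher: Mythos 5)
Your proposal is correct and follows essentially the same route as the paper: delete an end letter of $w$, compute that the lower-right entry of the $\pi$-image of the shortened word is the unit $b$ (resp.\ $b^{-1}$), and conclude that exactly one letter can be appended on the other side to return to $\bar{\mathcal{A}}^l$. The only cosmetic difference is that you finish by citing Theorem \ref{zeroeffect}(i), whereas the paper redoes that insertion computation inline (and dispatches the predecessor case with ``similarly''), so your write-up is a mild streamlining of the same argument.
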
 
\begin{proof}
Assume that $\alpha_0 \alpha_1 \ldots \alpha_{l-1} \in \bar{\mathcal{A}}^l$. Then $\pi(\alpha_0 \alpha_1 \ldots \alpha_{l-1}) = \begin{bmatrix}
a & b \\-b^{-1} & 0 \end{bmatrix}$ for some $a \in \mathbb{Z}/N \mathbb{Z}$ and $b \in (\mathbb{Z}/N\mathbb{Z})^\times$. Thus
\begin{align*}
\pi(\alpha_1 \alpha_2 \ldots \alpha_{l-1}) 
&= \pi(\alpha_0)^{-1}\pi(\alpha_0 \alpha_1 \ldots \alpha_{l-1})\\
&= \begin{bmatrix} \alpha_0 & -1 \\1 & 0 \end{bmatrix}\begin{bmatrix} a & b \\-b^{-1} & 0 \end{bmatrix}
= \begin{bmatrix} \alpha_0 a + b^{-1} & \alpha_0 b \\a & b \end{bmatrix},
\end{align*}
so
\begin{align*}
\pi(\alpha_1 \alpha_2 \ldots \alpha_l) &=\pi(\alpha_1 \alpha_2 \ldots \alpha_{l-1})\pi(\alpha_l)\\ 
&= \begin{bmatrix} \alpha_0 a+b^{-1} & \alpha_0 b \\a & b \end{bmatrix} \begin{bmatrix}0 & 1 \\-1 & \alpha_l \end{bmatrix} 
=\begin{bmatrix} -\alpha_0 b & \alpha_0 a + b^{-1} + \alpha_0\alpha_lb \\-b & a + \alpha_lb \end{bmatrix}.
\end{align*}
Since $b \in (\mathbb{Z}/N \mathbb{Z})^\times$, $\alpha_1 \alpha_2 \ldots \alpha_l \in \bar{\mathcal{A}}^l \Leftrightarrow \alpha_l = -ab^{-1}$. Hence $w$ has a unique immediate successor in $\bar{\mathcal{A}}^l$. Similarly, we can show  that $w$ also has a unique immediate predecessor in $\bar{\mathcal{A}}^l$.
\end{proof}

For $w = \alpha_1 \alpha_2 \ldots \alpha_l\in \bar{\mathcal{A}}^l$, by Theorem \ref{imm} there exists an infinite word
\[W=\ldots \alpha_{-1} \alpha_0 \alpha_1 \alpha_2 \alpha_3 \ldots\]
such that $\alpha_{i+1} \ldots \alpha_{i+l}$ is the immediate successor in $\bar{\mathcal{A}}^l$ of $\alpha_{i} \ldots \alpha_{i+l-1}$ for all integer~$i$. That is, all subwords formed by $l$ consecutive letters of $W$ are elements in $\bar{\mathcal{A}}^l$. Since $\bar{\mathcal{A}}^l$ is finite, the infinite word $W$ associated to $w$ is periodic. Hence for every $w \in \bar{\mathcal{A}}^l$, there exists the smallest positive integer $s$ such that the infinite word $W$ associated to $w$ is $s$-periodic.    

\begin{ex}
Some infinite periodic words over $\mathbb{Z}/6\mathbb{Z}$.
\begin{enumerate}
	\item The infinite periodic word corresponding to both $121$ and $212$ is a $2$-periodic word $\ldots 1212 \ldots$. 
	\item The infinite periodic word corresponding to $234, 343$ and $432$ is a $4$-periodic word 
				$\ldots 23432343 \ldots$.
\end{enumerate}
\end{ex}

\begin{thm}
Let $W=\ldots \alpha_{s-1} \alpha_0 \alpha_1 \ldots \alpha_{s-1} \alpha_0 \alpha_1 \ldots$ be an infinite $s$-periodic word with letters in $\mathbb{Z} / N \mathbb{Z}$. Then there exists a smallest positive integer $t$ such that all
subwords of length $ts-1$ in $W$ belong to $\bar{\mathcal{A}}$. Moreover, all subwords of length $lts-1 \ (l \geq 1)$ of $W$ belong to $\bar{\mathcal{A}}$.
\end{thm}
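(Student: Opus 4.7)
The plan is to split the proof into two parts, an existence half and the ``moreover'' half, and reduce each to a matrix calculation. Existence will follow from the finiteness of $\SL_2(\bbZ/N\bbZ)$, and the induction in $l$ will follow from Lemma \ref{one}(i).

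For existence, set $M := \pi(\alpha_0\alpha_1\cdots\alpha_{s-1})$ and, for each integer $i$, $M_i := \pi(\alpha_i\alpha_{i+1}\cdots\alpha_{i+s-1})$, the matrix of a period starting at position $i$. By $s$-periodicity it suffices to consider $0 \le i < s$, and writing $A_i := \pi(\alpha_0\cdots\alpha_{i-1})$ and $B_i := \pi(\alpha_i\cdots\alpha_{s-1})$, one has $M = A_i B_i$ and (using periodicity $\pi(\alpha_{s+j}) = \pi(\alpha_j)$) $M_i = B_i A_i = A_i^{-1} M A_i$. So each $M_i$ is a conjugate of $M$ in the finite group $\SL_2(\bbZ/N\bbZ)$. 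Let $T$ denote the order of $M$; then $M_i^T = I$ for every $i$. Since the product of $T$ consecutive periods starting at $i$ equals $M_i^T = I$, the length-$(Ts-1)$ subword starting at $i$ has matrix
\[
M_i^T \cdot \pi(\alpha_{i+Ts-1})^{-1} = \pi(\alpha_{i-1})^{-1} = \begin{bmatrix} \alpha_{i-1} & -1 \\ 1 & 0 \end{bmatrix},
\]
using $Ts\equiv 0 \pmod{s}$. Its $(2,2)$-entry is $0$ (and the determinant forces the $(2,1)$-entry to be a unit), so by the characterization of $\bar{\mathcal{A}}$ this subword lies in $\bar{\mathcal{A}}$. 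Thus $t = T$ works, and well-ordering yields the smallest such positive integer $t$.

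For the moreover part, fix the smallest such $t$ and let $l \geq 1$. The length-$(lts-1)$ subword of $W$ starting at position $i$ factors as
\[
w_1 \, \delta_1 \, w_2 \, \delta_2 \, \cdots \, w_{l-1} \, \delta_{l-1} \, w_l,
\]
where $w_j := \alpha_{i+(j-1)ts}\cdots\alpha_{i+jts-2}$ has length $ts-1$ and $\delta_j := \alpha_{i+jts-1} \in \bbZ/N\bbZ$ (total length $l(ts-1) + (l-1) = lts-1$). Each $w_j$ is itself a length-$(ts-1)$ subword of $W$, hence lies in $\bar{\mathcal{A}}$ by the existence part. A straightforward induction on $l$ using Lemma \ref{one}(i), which gives $w\alpha w' \in \bar{\mathcal{A}}$ whenever $w,w' \in \bar{\mathcal{A}}$, then shows that the whole subword belongs to $\bar{\mathcal{A}}$. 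The conceptual core is the identification of $t$ consecutive periods starting at $i$ with $M_i^t$, a conjugate of $M^t$, so that the finite order of $M$ in $\SL_2(\bbZ/N\bbZ)$ simultaneously kills all the $M_i^T$; everything else reduces to routine bookkeeping and the concatenation property from Lemma \ref{one}.
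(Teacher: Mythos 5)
Your proof is correct and, for the existence half, follows essentially the same route as the paper: the cyclic shifts of one period have conjugate images in $\SL_2(\mathbb{Z}/N\mathbb{Z})$, hence a common order $T$, so $T$ consecutive periods map to the identity and deleting the final letter leaves the matrix $\pi(\alpha_{i-1})^{-1}$ with vanishing $(2,2)$-entry, i.e.\ a word in $\bar{\mathcal{A}}$; you merely make the conjugacy $M_i = A_i^{-1}MA_i$ explicit where the paper only asserts it. Your one genuine addition is the ``moreover'' clause: the paper's proof stops after producing $t \le t'$ by well-ordering, whereas you decompose a subword of length $lts-1$ into $l$ blocks of length $ts-1$ (each in $\bar{\mathcal{A}}$ by the defining property of the minimal $t$) separated by single letters and conclude by induction from Lemma \ref{one}(i), thereby filling a step the published argument leaves implicit.
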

\begin{proof}
We observe that the elements
\[\pi(\alpha_0 \alpha_1 \ldots \alpha_{s-1}), \pi(\alpha_1 \ldots \alpha_{s-1} \alpha_0), \ldots , \pi(\alpha_{s-1} \alpha_0 \ldots \alpha_{s-2}) \in \SL_2(\mathbb{Z}/N \mathbb{Z})\] 
are all conjugate. Then they have a common order $t'$, we claim that $t'$ has the desired property. Let $w$ be a subword of length $t's$ in $W$. Thus $w = \underbrace{w'w' \ldots w'}_{t' \ \text{copies}}$ where $w'$ is a subword of length $s$ in $W$, so $$\pi(w) = \pi(\underbrace{w'w' \ldots w'}_{t' \ \text{copies}}) = (\pi(w'))^{t'} = \begin{bmatrix} 1 & 0 \\0 & 1 \end{bmatrix}.$$ Assume that $w = \beta_1 \ldots \beta_{t's}$. The subword of length $t's-1$ associated with $w$ is in~$\bar{\mathcal{A}}$ as a result of $\pi(\beta_1 \ldots \beta_{t's-1}) = \pi(w) \pi(\beta_{t's})^{-1} = \begin{bmatrix} \beta_{t's} & -1 \\1 & 0 \end{bmatrix}$. Hence $t \le t'$ exists by the well-ordering principle.
\end{proof}

\begin{remark}
In the above proof, sometimes $t < t'$. For example, consider the infinite $1$-periodic word, $\ldots 000 \ldots$. The order of $\pi(0) = 4$ but we can choose $t=2$ since $0 \in \bar{\mathcal{A}}$. Moreover, since $t'$ divides $|\SL_2(\mathbb{Z} / N \mathbb{Z})|$, we know that $t \le |\SL_2(\mathbb{Z} / N \mathbb{Z})|$.
\end{remark}

\begin{ex}
In $\mathbb{Z}/6\mathbb{Z}$, consider the $4$-periodic word $W = \ldots23432343\ldots$.
\begin{enumerate}
\item We have $t=1$ so that all subwords of length $4(1) - 1=3$ in $W$ belong to $\bar{\mathcal{A}}$.\\
$(234, 343, 432, 323)$
\item For $l=2$, all subwords of length $4(2)-1=7$ in $W$ belong to $\bar{\mathcal{A}}$.\\
$(2343234, 3432343, 4323432, 3234323)$
\item For $l=3$, all subwords of length $4(3)-1=11$ in $W$ belong to $\bar{\mathcal{A}}$.\\
$(23432343234, 34323432343, 43234323432, 32343234323)$
\end{enumerate}
\end{ex}

\end{document}